\newtheorem{theorem}{Theorem}
\newtheorem{definition}{Definition}
\newtheorem{prop}{Proposition}
\newtheorem{remark}{Remark}
\newtheorem{lemma}[theorem]{Lemma}
\newtheorem{ass}{Assumption}
\newtheorem*{notation}{Notation}
\renewenvironment{proof}[1][Proof]{%
    \par\noindent{\bfseries #1. }}{\qed}
\newlist{condlist}{enumerate}{1}
\setlist[condlist, 1]{label=(\Alph*)}
\newcommand{\R}{\mathbb{R}}
\newcommand{\N}{\mathbb{N}}
\newcommand{\Z}{\mathbb{Z}}
\newcommand{\T}{\mathbb{T}}
\newcommand{\HH}{\mathbb{H}}
\newcommand{\E}{\mathbb{E}}
\newcommand{\LL}{\mathbb{L}}
\newcommand{\prob}{\mathbb{P}}
\newcommand{\curl}{\text{curl}}
\newcommand{\W}{\mathbb{W}}
\newcommand{\norm}[1]{\left\lVert #1 \right\rVert}
\newcommand{\indicator}{\mathbb{I}}
\title[Uniform approximation 2d-SNS]{A uniform point vortex approximation for the solution of the two dimensional Navier Stokes equation with transport noise}
\author[F. Giovagnini]{Filippo Giovagnini$^{\star}$}
\author[D. Crisan]{Dan Crisan$^{\dagger}$}
\thanks{$^\star$Department of Mathematics, Imperial College London. Corresponding author's e-mail: \href{mailto:f.giovagnini23@imperial.ac.uk}{f.giovagnini23@imperial.ac.uk}}
\thanks{$^\dagger$Department of Mathematics, Imperial College London.}
\begin{document}

\maketitle

\begin{abstract}
We study a model of interacting particles represented by a system of $N$ stochastic differential equations. We establish that the (mollified) empirical distribution of the system converges uniformly with respect to both time and spatial variables to the solution of the two-dimensional Navier–Stokes equation with transport noise. The proofs are based on a semigroup approach.
\end{abstract}

\tableofcontents

\section{Introduction}

This paper investigates the mean-field limit of an interacting particle system and demonstrates that, after mollification, the limit is governed by the two-dimensional stochastic Navier–Stokes equations (2d-SNS). The particle system was originally introduced by Chorin \cite{Chorin1994VorticityAT}, with the first rigorous convergence result established by Marchioro and Pulvirenti \cite{Marchioro1982HydrodynamicsIT}. Subsequent work by Long \cite{Long1988} obtained a convergence rate, while Méléard \cite{meleard,meleard_monte_carlo} strengthened these results by proving convergence in the path space of the empirical measures.
A key refinement was introduced by Flandoli, Olivera, and Simon \cite{paper}, who considered a \emph{mollified} version of the system. Their approach improved the convergence, upgrading the distributional result of \cite{Marchioro1982HydrodynamicsIT} to a uniform one. The purpose of the present work is to extend this stronger uniform convergence to a stochastic setting, where each particle is additionally influenced by a common multiplicative noise.
More precisely, we prove that the mollified empirical measure of the particle system converges uniformly in time and space to the unique solution of
\begin{equation}
\label{eq:main_equation}
\begin{cases}
& \partial_t\omega(t, x) + u(t, x)\cdot\nabla\omega(t, x),dt + \sum_{k\in K} \sigma_{k}(x) \circ dW^k_t\cdot\nabla\omega(t, x) = \nu\Delta\omega(t, x),dt, \quad x \in \T^2,\\
& u(t, x) = K*\omega(t, x), \\
& \omega_{|t=0}(x) = \omega_0(x),
\end{cases}
\end{equation}
where $\omega_0:\T^2 \to \R^2$ is such that $\int_{\T^2}\omega_0(x)dx = 0$.
Here, $\circ dW^k_t$ denotes a Stratonovich-type stochastic integral, and $K$ is the Biot–Savart kernel (see the Appendix for details).
This work connects with a broader literature on stochastic mean-field limits. In particular, Guo and Luo \cite{GuoLuo2023} also study a particle system under a common noise, but in their setting the noise is scaled in such a way that it vanishes in the limit, leading to a deterministic partial differential equation (PDE) rather than a stochastic one (SPDE). Moreover, their convergence result is not formulated in fractional Sobolev spaces, in contrast with the approach we pursue here. See also \cite{Coghi2016Propagation}, \cite{Flandoli2024MeanField}, \cite{doi:10.1142/S021949372040002X}, and \cite{rosenzweig2020meanfieldlimitstochasticpoint} for related perspectives.
The paper is organized as follows. Section \ref{sec:notation} introduces notation and recalls preliminary results. Section \ref{sec:particle-system} presents the particle system and states the main theorem. Section \ref{sec:compactness} establishes compactness of the associated laws. Section \ref{sec:passing-to-the-limit} proves that any accumulation point of the sequence satisfies the 2d-SNS equation. Section \ref{sec:lifting} addresses the lifting of the truncation. Finally, Section \ref{sec:convergence-probability} proves convergence in probability of the full sequence, eliminating the need for subsequence arguments.

\newpage
\section{Notation}
\label{sec:notation}

Here is the notation we will use through the paper.

\begin{notation}
$\left.\right.$\\[-5mm]
\begin{itemize}
    \item We write \( a \lesssim b \) when there exists a positive constant \( C \), independent of \( a \) and \( b \), such that $a \leq C b$.
    \item If $f, g:\T^2 \to \R$, we define $f \star g : \T^2 \to \R$ as follows:
    \[
    f \star g (x) := \int_{\T^3} f(x-y) g(y) dy, \quad x \in \T^2.
    \]
    \item If $p \geq 1$, $\LL^p(\T^2; \R)$ denotes the usual $\LL^p$ space on the torus.
    \item For $p \geq 1$ and $\alpha \in \R$:
    \[
    \HH^{\alpha}_p(\T^2):= \left\{ f \in \LL^p(\T^2) \bigg| \sum\limits_{k \in \Z^3 } \left(1 + |k|^2 \right)^{\alpha/2} c_k(f) \textbf{e}_k(x) \in \LL^p(\T^2)\right\},
    \]
    where $\textbf{e}_k: \T^2 \to \mathbb{C}$ is defined as:
    \[
    \textbf{e}_k(x) := \frac{1}{(2 \cdot \pi)^{\frac{3}{2}}} e^{i <k, x>}.
    \]
    and where $c_k(f)$ is the $k$-th Fourier coefficient of $f$. For a connection with the usual definition with the classical Sobolev spaces, see \cite{hitchhiker}.
    \item For $f:\T^2 \to \R$, we will write $\mathcal{F}(f)$ for the Fourier transform of $f$.
    \item Let $\chi$ be supported in $\{x: |x|<c \}$ and $\rho$ supported in $\{x: a \leq |x| \leq b\}$ such that:
    \begin{enumerate}
        \item $\chi+\sum_{j \geq 0} \rho(2^{-j}) \equiv 1$,
        \item $\text{supp}(\chi) \cap \text{supp}(\rho(2^{-j}\cdot )) = \emptyset$ for $j \geq 1$ and $\text{supp}(\rho(2^{-i}\cdot )) \cap \text{supp}(\rho(2^{-j}\cdot )) = \emptyset$ for all $i, j \geq 0$ with $|i-j| \geq 1$.
    \end{enumerate}
    Define $\rho_{-1}$ and $\rho_j=\rho(2^{-j}\cdot)$. Now define $\Delta_j f=\mathcal{F}^{-1}(\rho_j \mathcal{F}(f))$. Given $p, q \in [1, +\infty]$ we define the Besov spaces as:
    \[
    B^{\alpha}_{p, q} = \left\{f: \norm{f}_{B^{\alpha}_{p, q}}:= \left( \sum\limits_{j \geq -1} \left( 2^{j\alpha}\norm{\Delta_j f}_{\LL^p(\T^2)}\right)^q\right)^{1/q} < +\infty \right\}
    \]
    \item For a $C^2$ function $f:\T^2 \to \R$, $\nabla^2 f: \T^2 \to \R^{2 \times 2}$ denotes the Hessian matrix of $f$.
    \item $\Delta : \HH^{2}_p(\T^2) \subset \LL^p(\T^2) \to \LL^p(\T^2)$ is the usual Laplacian.
    \item $\indicator : \LL^p(\T^2) \to \LL^p(\T^2)$ is the identity operator.
    \item $(\indicator - \alpha^2 \Delta)^{-1} :  \LL^p(\T^2) \to \HH^{2}_p(\T^2)$ is the inverse of the differential operator $\indicator - \alpha^2 \Delta$.
    \item For $-\alpha > 0$ the following characterization holds:
    \[
    \HH^{-\alpha}_p(\T^2) = \left( \HH^{\alpha}_p(\T^2) \right)^*
    \]
    where $\left( \cdot \right)^*$ denotes the topological dual of a Banach space.
    \item $C([0, T]; \HH^{\alpha}_p(\T^2))$ is the space of functions $f:[0, T] \to \HH^{\alpha}_p(\T^2) $ that are continuous on $[0,T]$.
    \item For two functions $f,g:D \to \R$ we will denote:
    \[
    \left\langle f,g \right\rangle:= \int_D f(x) \cdot g(x) dx,
    \]
    where $D=\T^2$ or $D=\R^2$.
    \item $\left(\Xi, (\mathcal{F}_t)_{t \geq 0}, \prob \right)$ will denote a filtrated probability space and  $\left(\Xi, (\mathcal{F}_t)_{t \geq 0}, \prob, (W_t)_{t \geq 0} \right)$ will denote a $\R$ valued Wiener process, unless otherwise stated.
    \item Given a vector field $\sigma : \T^2 \to \R^2$, we write $\sigma \cdot \nabla \sigma$ matrix-vector multiplication $\left(\nabla \sigma \right) \sigma $.
    \item For a probability measure $\mu \in \mathcal{P}(\T^2)$ and for a function $f: \T^2 \to \R$, we denote $f \star \mu(x)$ for $\int_{\T^2} f(x - y) \mu(dy)$, i.e. the integral of $f$ with respect to $\mu$ when it is well defined.
\end{itemize}
\end{notation}



\section{The particle system and the main result}
\label{sec:particle-system}
Let $\omega_0:\T^2 \to \R$ be the initial condition of the system \eqref{eq:main_equation}. From now on we will assume $\int_{\T^2} \omega_0(x)dx=0$, and we define $\Gamma^+:=\int_{\T^2} \max(\omega_0(x), 0) dx$ and $\Gamma^-:=\int_{\T^2} \max(-\omega_0(x), 0) dx$. This condition is necessary being $\omega$ the curl of a vector field on a compact set.

Consider:
\begin{equation}
\label{eq:particle_system}
\begin{cases}
dX_t^{i,N, +}:= & F \bigg( \left(V^N \star K\right) \star \mu^N_t (X^{i, N, +}_t) \bigg) dt + \sqrt{2 \nu} dB_t^{i, +} +\sum_{k \in \mathbb{N}} \sigma_k\left(X_t^{i,N, +}\right) \circ dW_k(t), \\
dX_t^{i,N, -}:= & F \bigg( \left(V^N \star K\right) \star \mu^N_t (X^{i, N, -}_t) \bigg) dt + \sqrt{2 \nu} dB_t^{i, -} +\sum\limits_{k \in \mathbb{N}} \sigma_k\left(X_t^{i,N, -}\right) \circ dW_k(t), \\
\mu^{N, \pm}_t =&  \frac{\Gamma^\pm}{N} \sum\limits_{j = 1}^N \delta_{X^{j, \pm}_t},\\
\mu^N_t = & \mu^{N, +}_t - \mu^{N, -}_t,\\
X_0^{i,N, \pm}= & X^{i, \pm}_0.\\
\end{cases}
\end{equation}

Here the function $F_M: \T^2 \to \R^2$ is defined as:
\begin{equation}
\label{eq:definition_of_the_truncation}
\begin{bmatrix}
           x_{1} \\
           x_{2}
         \end{bmatrix} \to \begin{bmatrix}
           (x_{1} \wedge M ) \vee (-M) \\
           (x_{2} \wedge M ) \vee (-M)
         \end{bmatrix} =: F \begin{bmatrix}
           x_{1} \\
           x_{2}
\end{bmatrix}.
\end{equation}

Since $M$ will always be a fixed positive constant, we will omit it for the sake of clarity and will just write $F=F_M$.

Furthermore, $K$ denotes the Biot-Savart kernel. For a detailed definition and for the proof of the main properties, see the Appendix.

The function $V^N$ is an approximation of the identity. For a rigorous definition and list of properties see the Appendix.

\begin{definition}
\label{def:solution_of_system}
Let $0<\eta<1$ and $p>2$. We will say that $(\omega^+, \omega^-)$ is a solution to \eqref{eq:main_equation} if: $\left( \Xi, \left( \mathcal{F}_t \right)_{t \geq 0}, \prob\right)$ is a filtered probability space, $W_k$ is a sequence of $\R$-valued independent Brownian Motions:
\[
(\omega^+, \omega^-): \left(\Xi \times[0, T] \times \mathbb{T}^2 \right)^2 \rightarrow \R^2,
\]
is an adapted process with $(\omega^+, \omega^-) \in C\left([0, T]; \HH_p^{\eta}\left(\mathbb{T}^2\right)\right)^2$ almost surely and for every $\phi, \psi \in C^{\infty}(\T^2)$ the following two hold:
\begin{equation}
\begin{aligned}
&\left\langle \omega^+_t ,\phi \right\rangle + \int_0^t \left\langle \omega^+_s, K \star \left( \omega^+_s - \omega^-_s \right) \cdot\nabla\phi \right\rangle ds =\nu \int_0^t \left\langle \omega^+_s, \Delta \phi \right\rangle ds + \\
& \int_0^t \sum_{k\in K}
\left\langle \omega^+_s, \sigma_{k} \cdot\nabla\phi \right\rangle dW^k_s + \frac{1}{2} \int_0^t \left\langle \omega^+_s, \left(\sigma^k \cdot \nabla \sigma^k \right) \cdot \nabla \phi \right\rangle ds + \frac{1}{2} \int_0^t \left\langle \omega^+_s, \left(\sigma^{k}\right)^T \left(\nabla^2 \phi \right)\sigma^k  \right\rangle ds,
\end{aligned}
\end{equation}
and,
\begin{equation}
\begin{aligned}
&\left\langle \omega^-_t ,\phi \right\rangle + \int_0^t \left\langle \omega^-_s, K \star \left( \omega^+_s - \omega^-_s \right) \cdot\nabla\phi \right\rangle ds =\nu \int_0^t \left\langle \omega^-_s, \Delta \phi \right\rangle ds + \\
& \int_0^t \sum_{k\in K}
\left\langle \omega^-_s, \sigma_{k} \cdot\nabla\phi \right\rangle dW^k_s + \frac{1}{2} \int_0^t \left\langle \omega^-_s, \left(\sigma^k \cdot \nabla \sigma^k \right) \cdot \nabla \phi \right\rangle ds + \frac{1}{2} \int_0^t \left\langle \omega^-_s, \left(\sigma^{k}\right)^T \left(\nabla^2 \phi \right)\sigma^k  \right\rangle ds.
\end{aligned}
\end{equation} 
\end{definition}

We will make the following assumptions:
\begin{ass}
\label{ass:assprinci}
Let $ \beta \in (0,1)$, $p > 2$ and $\alpha \in (2/p, 1)$. Assume that:
\begin{enumerate}
    \item $V^N \in C^{\infty}(\T^2)$ and it is an approximation of the identity.
    \item $V^N(x)=N^{2\beta}V(N^{\beta}x)$ for a fixed $V$ (an explicit example is given in the Appendix).
    \item For all $q > 0$:
    \[\sup\limits_{N \in \mathbb{N}} \mathbb{E}[ \| V^N \star \mu_0^{N, \pm} \|^q_{\HH^{\alpha}_p(\T^2)}] < +\infty.\]
    \item $\omega_0 \in \mathbb{L}^{\infty}(\T^2)$ is such that $\{\mu_0^{N, +}\}_{N \in \N}$ weakly converges to $\omega_0^{+}(x)dx$ in probability, and $\{\mu_0^{N, -}\}_{N \in \N}$ weakly converges to $\omega_0^{-}(x)dx$ in probability.
    \item $0<\beta < \frac{1}{6 +2\alpha - \frac{4}{p}} < \frac{1}{4}$.
    \item $\sum\limits_{k \in \N} \sigma^k(x)^T \sigma^k(x)$ is such that $\nabla^2 \left( \sum\limits_{k \in \N} \sigma^k(x)^T \sigma^k(x) \right) = 0$ for all $x \in \T^2$.
    \end{enumerate}
\end{ass}

The last point in Assumption \ref{ass:assprinci} is not unusual in the context of stochastic fluid dynamics. As an example, a stronger version of it has been assumed in \cite{brzeniak2016}.

Applying the It\^{o} formula to $\phi(X^{i, N, \pm}_t)$ and then summing over $i$ one has:
\begin{equation}
\label{eq:empirical_process_test_function}
\begin{aligned}
\left\langle \mu_t^{N, +}, \phi \right\rangle = & \left\langle \mu_0^{N, +}, \phi \right\rangle + \int_0^t \left\langle \mu_s^{N, +}, F\left(K \star  V^N \star \mu_s^{N} \right) \cdot \nabla \phi \right\rangle ds + \\
&+ \frac{\sqrt{2\nu}}{N}\sum\limits_{i=1}^N \int_0^t \nabla \phi (X^{i,N, +}_s) \cdot d B^{i, +}_s + \int_0^t \left\langle \mu_s^{N, +} , \nu \Delta \phi \right\rangle ds +  \int_0^t \left\langle \mu_s^{N, +} , \sum\limits_{k \in \N} \left( \sigma^k \right)^T \left( \nabla^2 \phi \right) \sigma^k \right\rangle ds\\
&+ \sum\limits_{k \in \mathbb{N}} \int_0^t \left\langle \mu_s^{N, +}, \sigma_k \cdot \nabla \phi \right\rangle dW^k_s + \sum\limits_{k \in \N} \int_0^t \left\langle \mu_s^{N, +}, \left(\sigma_k \cdot \nabla \sigma_k\right) \cdot \nabla \phi \right\rangle ds.
\end{aligned}
\end{equation}

The same equation is satisfied by $\mu^{N, -}_t$ with the same exact computations.

Define $g_t^{N, \pm}(x) := V^N \star \mu_t^{N, \pm}(x)$, and as for $\mu^{N}$, we will write $g^N_t:=g^{N, +}_t - g^{N, -}_t$. Choosing as a test function $\phi_x(y):=V^N(x-y)$ in \eqref{eq:empirical_process_test_function} we obtain the following equation for $g^{N, +}_t$, for a fixed $x \in \T^2$:
\begin{equation}
\label{eq:g}
\begin{aligned}
g_t^{N, +}(x) = & g_0^{N, +}(x) + \int_0^t \left\langle \mu_s^{N, +}, F\left(K \star V^N \star \mu^N_s\right) \cdot \nabla V^N(x-\cdot) \right\rangle ds + \\
&+ \frac{\sqrt{2\nu}}{N}\sum\limits_{i=1}^N \int_0^t \nabla V^N(x-X^{i,N, +}_t) d B^{i, +}_s + \int_0^t \nu \Delta g_s^{N, +}(x) ds + \int_0^t \sum\limits_{k \in \mathbb{N}} \left( \sigma^k \right)^T \nabla^2 g_s^{N, +} \sigma^k (x) ds \\
&+ \sum\limits_{k \in \mathbb{N}} \int_0^t \left\langle \mu_s^{N, +}, \nabla V^N(x-\cdot) \sigma_k \right\rangle dW^k_s + \\
&+ \sum\limits_{k \in \mathbb{N}} \int_0^t \left\langle \mu_s^{N, +}, \left( \nabla\sigma_k \cdot \sigma_k \right) \cdot \nabla V^N(x-\cdot) \right\rangle ds,
\end{aligned}
\end{equation}
and with the same exact computations one shows that $g^{N, -}_t$ satisfies the same equation.

\begin{remark}
Let us justify the term:
\[
\int_0^t \sum\limits_{k \in \mathbb{N}} \left( \sigma^k \right)^T \nabla^2 g_s^{N, +} \sigma^k (x) ds.
\]
One can write:
\[
\begin{aligned}
& \int_0^t \left\langle \mu_s^{N, +} , \sum\limits_{k \in \N} \left( \sigma^k \right)^T \left( \nabla^2 V^N(x-\cdot) \right) \sigma^k \right\rangle ds = \int_0^t \left\langle \mu_s^{N, +} , \nabla^2 \sum\limits_{k \in \N} \left( \sigma^k \right)^T \sigma^k V^N(x-\cdot) \right\rangle ds \\
&= \int_0^t \left\langle \mu_s^{N, +} , \nabla^2 \left( V^N(x-\cdot) \sum\limits_{k \in \N} \left( \sigma^k \right)^T \sigma^k \right) \right\rangle ds
\end{aligned}
\]
and since:
\[
\sum\limits_{k \in \N} \left( \sigma^k \right)^T \sigma^k
\]
is constant by Assumption \ref{ass:assprinci} one has:
\[
\begin{aligned}
&\int_0^t \left\langle \mu_s^{N, +} , \sum\limits_{k \in \N} \left( \sigma^k \right)^T \left( \nabla^2 V^N(x-\cdot) \right) \sigma^k \right\rangle ds = \int_0^t \sum\limits_{k \in \N} \left( \sigma^k \right)^T \sigma^k\left\langle \mu_s^{N, +} , \nabla^2 \left( V^N(x-\cdot) \right) \right\rangle \\
&=\int_0^t \sum\limits_{k \in \N} \left( \sigma^k \right)^T \sigma^k \nabla^2 g^{N, +}(x) = \int_0^t \sum\limits_{k \in \mathbb{N}} \left( \sigma^k \right)^T \nabla^2 g_s^{N, +} \sigma^k (x) ds.
\end{aligned}
\]
\end{remark}

Let \(\phi \in C^{\infty}(\mathbb{T}^2)\). By taking the scalar product between each term in \eqref{eq:g} and \(\phi(x)\), and integrating over \(\T^2\), we obtain:
\begin{equation}
\label{eq:g_with_test_function_first_version}
\begin{aligned}
\left\langle g_t^{N, +}, \phi \right\rangle = & \left\langle g_0^{N, +}, \phi \right\rangle + \int_0^t \left\langle \mu_s^{N, +}, F\left(K \star V^N \star \mu_s^{N}\right) \cdot \nabla \left( V^N \star \phi \right) \right\rangle ds + \\
&+ \frac{1}{N}\sum_{i=1}^N \int_0^t \nabla \left( V^N \star \phi \right)(X^{i,N, +}_t) d B^{i, +}_s + \int_0^t \left\langle \nu \Delta g_s^{N, +}, \phi \right\rangle ds + \int_0^t \left\langle \sum_{k} \left( \sigma^k \right)^T \nabla^2 g_s^{N, +} \sigma^k, \phi \right\rangle ds + \\
&+ \sum_{k \in \mathbb{N}} \int_0^t \left\langle \left( \sigma^k \mu_s^{N, +} \right) \star V^N, \nabla \phi \right\rangle dW^k_s + \sum_{k \in \mathbb{N}} \int_0^t \left\langle \left( \left( \sigma^k \cdot \nabla \sigma^k \right) \mu_s^{N, +} \right)\star V^N, \nabla \phi \right\rangle ds. 
\end{aligned}
\end{equation}
Again, same exact computations and one can show that the same holds for $g^{N, -}_t$.

We are now ready to state our main result:
\begin{theorem}
\label{th:main_theorem}
    Assume Assumption \ref{ass:assprinci}. Then, \((g^{N, +}, g_t^{N, -})\) converges to the solution of the system \eqref{eq:main_equation} in the sense of definition \ref{def:solution_of_system}, in the strong topology of \(C([0, T]; \HH^{\eta}_p(\T^2))^2\), for any \(\eta \in \left(\frac{2}{p}, \alpha\right)\), in probability. In other words, for any $\varepsilon > 0$:
    \[
    \prob \left[ \norm{(g^{N, +}, g_t^{N, -}) - (\omega^+, \omega^-)}_{C([0, T]; \HH^{\eta}_p(\T^2))^2} > \varepsilon \right] \xrightarrow{N \to +\infty} 0.
    \]
\end{theorem}



\section{Criterion of compactness}
\label{sec:compactness}

This section is dedicated to proving the following theorem:

\begin{theorem}
    Let $\eta \in (2/p, \alpha)$. Let \(L^{N, \pm}\) represent the laws of \(g^{N, \pm} : \Xi \to \mathfrak{F} = C([0,T], \HH^{\eta}_p(\T^2))\). Then, the family \((L^{N, \pm})_{N}\) is relatively compact.
\end{theorem}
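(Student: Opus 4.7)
The plan is to prove tightness of the laws $L^{N,\pm}$ by controlling $g^{N,\pm}$ in a Banach space compactly embedded in $\mathfrak{F}=C([0,T];\HH^{\eta}_p(\T^2))$. Specifically, by a Simon/Aubin--Lions-type criterion (compactness of bounded subsets of $C^{\gamma}([0,T];X_0)\cap L^{\infty}([0,T];X_1)$ inside $C([0,T];X)$ whenever $X_1\hookrightarrow\hookrightarrow X\hookrightarrow X_0$), it is enough to establish, for some $\alpha>\eta$, some $\gamma>0$, and some $q$ large,
\[
\sup_{N}\E\bigl[\sup_{t\in[0,T]}\|g_t^{N,\pm}\|^{q}_{\HH^{\alpha}_p(\T^2)}\bigr]<+\infty \quad\text{and}\quad \sup_{N}\E\bigl[\|g^{N,\pm}\|^{q}_{C^{\gamma}([0,T];\HH^{\eta}_p(\T^2))}\bigr]<+\infty.
\]
The Rellich--Kondrachov compact embedding $\HH^{\alpha}_p\hookrightarrow\hookrightarrow\HH^{\eta}_p$ and Chebyshev's inequality then yield the desired tightness. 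The assumption $\eta\in(2/p,\alpha)$ together with Sobolev embedding will also later give uniform bounds in $C(\T^2)$, which fits the main theorem's conclusion.

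\textbf{Term-by-term analysis of the mild formulation.} I work directly from \eqref{eq:mild_form}, calling its five summands $I_0^N,\dots,I_4^N$. For $I_0^N=e^{tA}g_0^{N,+}$, the analyticity of $e^{tA}$ (proved in the appendix) and Assumption \ref{ass:assprinci}(3) immediately give the uniform moment bound in $\HH^{\alpha}_p$. For the drift $I_1^N$, I exploit that $F$ is bounded by $M$ and $\mu^{N,\pm}_s$ is a positive measure of total mass $\Gamma_{\pm}$, so for a suitable $\theta\in(0,1)$ and with $-\theta<-2/p$,
\[
\bigl\| \nabla V^N*\bigl(F(K*g_s^{N,+}-K*g_s^{N,-})\,\mu_s^{N,+}\bigr)\bigr\|_{\HH^{-\theta}_p} \lesssim M\,\Gamma_{+}\,\|\nabla V^N\|_{\HH^{-\theta}_p},
\]
and the smoothing $\|e^{(t-s)A}\|_{\HH^{-\theta}_p\to\HH^{\alpha}_p}\lesssim (t-s)^{-(\theta+\alpha)/2}$ is time-integrable provided $\theta+\alpha<2$. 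The deterministic correctors $I_4^N$ are handled identically using $\sum_k\|\sigma_k\|^2_\infty+\|\nabla\sigma_k\|^2_\infty<\infty$ from the standing assumption on $(\sigma_k)$.

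\textbf{Stochastic convolutions and time regularity.} The stochastic terms $I_2^N$ (independent Brownian noises) and $I_3^N$ (common transport noise) are treated by a stochastic factorisation argument à la Da Prato--Kwapien--Zabczyk: write $e^{(t-s)A}=c_\kappa\int_s^t(t-r)^{\kappa-1}(r-s)^{-\kappa}e^{(t-r)A}e^{(r-s)A}\,dr$, apply Burkholder--Davis--Gundy in the inner integral to estimate it in a negative-regularity space, and then use the analytic smoothing of $e^{(t-r)A}$ to reach $\HH^{\alpha}_p$. For $I_2^N$ the Itô isometry collapses the double sum thanks to the independence of the $B^{i,+}$'s and the $1/N^2$ prefactor, leaving one power of $N$; for $I_3^N$ the Hilbert--Schmidt structure in $k$ is controlled by $\sum_k\|\sigma_k\|_\infty^2<\infty$. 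To obtain the Hölder-in-time estimate I then combine the analytic bound $\|e^{t_2 A}f-e^{t_1 A}f\|_{\HH^{\eta}_p}\lesssim (t_2-t_1)^{\gamma}\|f\|_{\HH^{\eta+2\gamma}_p}$ on the deterministic pieces with a Kolmogorov continuity estimate on the stochastic convolutions, yielding $\E[\|g^{N,+}_{t_2}-g^{N,+}_{t_1}\|_{\HH^{\eta}_p}^{q}]\lesssim |t_2-t_1|^{1+\varepsilon}$ for $q$ large and suitable $\varepsilon>0$.

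\textbf{Main obstacle.} The delicate accounting is the interplay between the mollifier scaling $V^N(x)=N^{2\beta}V(N^{\beta}x)$, which makes $\|\nabla V^N\|_{\HH^{-\theta}_p}$ grow as a positive power of $N$, and the gains provided by the diffusive smoothing of $e^{(t-s)A}$, the $1/N$ factor in $I_2^N$, and the Itô isometry. The whole scheme closes precisely under the restriction $\beta(4+2\alpha-4/p)<1$ from Assumption \ref{ass:assprinci}(5): this is what allows one to pick $\theta>2/p$, $\alpha\in(2/p,1)$, and an exponent $q$ large enough so that the temporal singularity and the $N$-blow-up can be absorbed simultaneously. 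Verifying this bookkeeping for both $I_1^N$ and the stochastic terms is the technical heart of the proof; once it is done, the two uniform bounds above are established and the compactness theorem follows from the abstract criterion.
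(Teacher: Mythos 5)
Your overall architecture differs from the paper's: you aim for an Ascoli/Kolmogorov-type criterion (uniform moments of $\sup_t\|g^{N,\pm}_t\|_{\HH^{\alpha}_p}$ plus H\"older equicontinuity in $\HH^{\eta}_p$), whereas the paper uses Simon's criterion for $\LL^q([0,T];\HH^{\alpha}_p)\cap \W^{\gamma,q'}([0,T];\HH^{-2}_2)\hookrightarrow\hookrightarrow C([0,T];\HH^{\eta}_p)$, which only requires the much weaker pointwise-in-time bound $\sup_t\E[\|g^{N}_t\|^q_{\HH^{\alpha}_p}]<\infty$ and a fractional-Sobolev-in-time estimate in the very negative space $\HH^{-2}_2$ (where $\|\nabla V^N\|_{-2,2}\le\|V^N\|_{0,2}\sim N^{\beta}$ makes the increment bounds almost immediate). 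Your route is legitimate in principle, but it is substantially more demanding, and as written it contains a genuine gap.

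The gap is in your treatment of the drift term $I_1^N$. The bound
\[
\bigl\| \nabla V^N*\bigl(F(K*g_s^{N,+}-K*g_s^{N,-})\,\mu_s^{N,+}\bigr)\bigr\|_{\HH^{-\theta}_p} \lesssim M\,\Gamma_{+}\,\|\nabla V^N\|_{\HH^{-\theta}_p}
\]
is true, but it destroys the convolution structure $V^N*\mu_s^{N,+}=g_s^{N,+}$ and leaves $\|\nabla V^N\|_{\HH^{-\theta}_p}\sim N^{\beta(3-\theta-2/p)}$ on the right-hand side with \emph{nothing} to compensate: unlike $I_2^N$, the drift carries no $1/N$ prefactor and no It\^o isometry, and the semigroup smoothing only controls the time singularity, not the $N$-growth. (Making $\|\nabla V^N\|_{\HH^{-\theta}_p}$ uniformly bounded would require $\theta>2$ in two dimensions, which destroys the integrability of $(t-s)^{-(\theta+\alpha)/2}$.) The paper's Proposition \ref{prop1} avoids this by using the positivity of $V^N$ and $\mu_s^{N,+}$ to get the pointwise bound $|V^N*(F(\cdot)\mu_s^{N,+})|\le M\,|V^N*\mu_s^{N,+}|=M\,|g_s^{N,+}|$, so the estimate keeps the unknown $\E[\|g_s^{N,+}\|^q_{\LL^p}]^{1/q}$ on the right-hand side and is closed by a singular Gronwall inequality — a closing step that is absent from your scheme and cannot be dispensed with, since the drift is self-referential. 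A secondary caution: your Kolmogorov estimate $\E[\|g^{N,+}_{t_2}-g^{N,+}_{t_1}\|^q_{\HH^{\eta}_p}]\lesssim|t_2-t_1|^{1+\varepsilon}$ in the positive-regularity space $\HH^{\eta}_p$ is far tighter than the paper's $\W^{\gamma,q'}$ bound in $\HH^{-2}_2$; the exponent bookkeeping does appear to close because $\eta<\alpha$ forces $\beta<\tfrac{1}{4+2\eta-4/p}$, but this must be verified explicitly and leaves essentially no room, whereas the paper's choice of auxiliary space makes the time-regularity estimate (Proposition \ref{prop2}) routine.
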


\begin{proof}
Let \(X = \HH^{\alpha}_p(\T^2)\), \(B = \HH^{\eta}_p(\T^2)\), and \(Y = \HH^{-2}_2(\T^2)\), with \(\alpha > \frac{2}{p}\) and \(\frac{2}{p} < \eta < \alpha\). We note that \(X \subset B \subset Y\) exhibits a compact dense embedding, which follows from the Kondrachov embedding theorem. For further details, refer to \cite{belyaev2022multipliers}. Additionally, for any \(f \in X\):
\[
\|f\|_B \leq C_R \|f\|_X^{1-\theta} \|f\|_Y^{\theta}
\]
where \(\theta = \frac{\alpha - \eta}{2+\alpha}\). Maintaining the notation from \cite{simon}, we define \(s_0 := 0\), \(s_1 := \gamma \in (0,\frac{1}{2})\), and select \(q, q' \geq 2\) such that:
\[
s_1 q' = \gamma q' > 1 \quad \text{and} \quad s_{\theta} := \theta s_1 = \theta \gamma > \frac{1-\theta}{q} + \frac{\theta}{q'}.
\]
Consequently, Corollary 9 of \cite{simon} tells us that the space 
\[
\LL^q([0,T], X) \cap \W^{\gamma, q'}([0,T], Y)
\]
is relatively compact in \(C([0,T],B)\). Therefore, for any \(\gamma \in (0,\frac{1}{2})\), for \(\alpha\) as specified in Assumption \ref{ass:assprinci}, and for \(q, q' \geq 2\), we consider the space:

\[
\mathfrak{Y}_0 = \LL^q([0,T], \HH^{\alpha}_p(\T^2)) \cap \W^{\gamma, q'}([0,T], \HH^{-2}_{2}(\T^2))
\]
and the space:
\begin{equation}
\label{definition_space_y}
\mathfrak{Y} := C([0,T], \HH^{\eta}_p(\T^2)).
\end{equation}
We conclude that \(\mathfrak{Y}_0\) is compactly embedded in \(\mathfrak{Y}\) for any \(\frac{2}{p} < \eta < \alpha\).

Applying Chebyshev's inequality, we obtain:
\[
\prob \left[\| g^{N, \pm}_{\cdot }\|^2_{\mathfrak{Y}_0} > R\right] \leq \frac{\E\left[\|g_{\cdot}^{N, \pm}\|^2_{\mathfrak{Y}_0}\right]}{R}
\]
for any \(R > 0\).

From Proposition \ref{prop1}:
\[
\E \bigg[ \bigg\| (I-A)^{\frac{\alpha}{2}} g_t^{N, \pm} \bigg\|_{\LL^p(\T^2)}^q \bigg] \leq C, \quad \text{ for all } t \in [0,T],
\]
from which we can easily conclude that 
\[
\E \bigg[ \| g^{N, \pm} \|^2_{\LL^q([0,T], \HH^{\alpha}_p(\T^2))} \bigg] \leq C.
\]
From Proposition \ref{prop2}
\[
\E \bigg[ \| g^{N, \pm} \|^2_{\W^{\gamma, q'}([0,T], \HH^{-2}_{2}(\T^2))} \bigg] \leq C.
\]
Thus, we obtain:
\[
\prob \left[\| g_{\cdot}^{N, \pm} \|^2_{\mathfrak{Y}} > R\right] \leq \prob \left[\| g_{\cdot}^{N, \pm} \|^2_{\mathfrak{Y}_0} > R\right] \leq \frac{C}{R},
\]
from which we deduce that the sequence of laws $\{L_N^{\pm}\}_{N \in \mathbb{N}}$ is tight in \(\mathfrak{Y}\) and the Theorem is proven.
\end{proof}

\begin{remark}
From the proof of this Theorem, we also get that $g^{N, \pm}$ is uniformly bounded in:
\[
\LL^{2}\left( \Xi; C\left([0,T]; \HH^{\eta}_p(\T^2) \right) \right)
\]
\end{remark}




\section{Passing to the limit}
\label{sec:passing-to-the-limit}

In this section, we prove the following result:

\begin{theorem}
    Assume that \(g^{N, +}\) converges almost surely to a random variable \(\omega^+\) in \(C([0,T]; \HH^{\eta}_p(\T^2))\), and \(g^{N, -}\) converges almost surely to another random variable \(\omega^{-}\) in \(C([0,T]; \HH^{\eta}_p(\T^2))\). Then, \((\omega^{+}, \omega^{-})\) is a solution in the sense of Definition \ref{def:solution_of_system}.
\end{theorem}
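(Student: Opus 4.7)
The plan is to pass to the limit termwise in the weak formulation \eqref{eq:g_with_test_function_first_version} for $g^{N,+}$ (and its analogue for $g^{N,-}$), exploiting the almost sure convergence $g^{N,\pm} \to \omega^{\pm}$ in $C([0,T]; \HH^{\eta}_p(\T^2))$ together with the uniform bounds from the remark at the end of Section \ref{sec:compactness}. Adaptedness of $\omega^{\pm}$ to $(\FF_t)$ is inherited from $g^{N,\pm}$, and membership in $C([0,T]; \HH^\eta_p)$ is the space of convergence itself. For the initial-value term, $g_0^{N,\pm} = V^N * \mu_0^{N,\pm}$ converges to $\omega_0^{\pm}$ a.s.\ by Lemma \ref{lemma:approx_identity} together with Assumption \ref{ass:assprinci}(4). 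The linear deterministic contributions ($\nu\Delta g^{N,\pm}$, $\sum_k (\sigma^k)^T H g^{N,\pm} \sigma^k$, and the It\^o--Stratonovich corrector) are continuous linear functionals of $g^{N,\pm}$ evaluated against the smooth test function, so the time integral passes to the limit by dominated convergence using the a.s.\ $\LL^q([0,T]; \HH^\alpha_p)$ bounds.

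The crux of the argument is the nonlinear convective term
\[
I^N(t) := \int_0^t \left\langle \mu_s^{N,+}, F\bigl(K*g_s^{N,+} - K*g_s^{N,-}\bigr)\cdot\nabla (V^N*\phi)\right\rangle ds.
\]
By Proposition \ref{prop:properties_of_K}, $K$ maps $\HH^{\eta}_p$ continuously into $\HH^{\eta+1}_p \hookrightarrow C(\T^2)$ for $\eta > 2/p$, so $K*g^{N,\pm} \to K*\omega^{\pm}$ uniformly on $[0,T]\times \T^2$ a.s., and since $F$ is componentwise $1$-Lipschitz, $h^N_s := F(K*g^{N,+}_s - K*g^{N,-}_s)$ converges uniformly to $h^\infty_s := F(K*(\omega^+_s - \omega^-_s))$ with a H\"older modulus (uniform in $s,N$) inherited from $\HH^{\eta+1}_p \hookrightarrow C^{\eta+1-2/p}$. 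I would split
\[
\left\langle \mu^{N,+}_s, h^N_s \cdot \nabla(V^N*\phi)\right\rangle = \left\langle g^{N,+}_s, h^N_s\cdot \nabla\phi\right\rangle + R^N_1(s) + R^N_2(s),
\]
with $R^N_1(s) = \langle \mu^{N,+}_s, h^N_s\cdot(V^N*\nabla\phi - \nabla\phi)\rangle$ and $R^N_2(s) = \langle \mu^{N,+}_s, h^N_s\cdot\nabla\phi - V^N*(h^N_s\cdot\nabla\phi)\rangle$; both remainders tend to zero uniformly in $s$ because $V^N*f \to f$ uniformly for equicontinuous families and $|\mu^{N,+}_s|(\T^2) = \Gamma^+$, while the main term converges to $\langle \omega^+_s, h^\infty_s\cdot\nabla\phi\rangle$ by the product of strong convergences $g^{N,+}_s \to \omega^+_s$ and $h^N_s \to h^\infty_s$. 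Finally, the a.s.\ bound $\|K*(\omega^+ - \omega^-)\|_{C([0,T]\times\T^2)} \lesssim \sup_s\|\omega^+_s - \omega^-_s\|_{\HH^\eta_p}$ lets one choose $M$ large enough so that $F$ acts as the identity along the limit trajectory, producing the untruncated term of Definition \ref{def:solution_of_system}.

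For the stochastic part, the idiosyncratic Brownian integral $\frac{\sqrt{2\nu}}{N}\sum_{i=1}^N \int_0^t \nabla(V^N*\phi)(X^{i,N,+}_s)\, dB^{i,+}_s$ has quadratic variation bounded by $\frac{2\nu t}{N}\|\nabla\phi\|_\infty^2$ and hence vanishes in $\LL^2(\Xi)$. For the transport-noise integrals $\sum_k \int_0^t \langle (\sigma^k\mu^{N,+}_s)*V^N, \nabla\phi\rangle\, dW^k_s$, the same $R^N_1, R^N_2$ splitting reduces each integrand to $\langle g^{N,+}_s, \sigma^k\cdot\nabla\phi\rangle$ modulo vanishing $V^N$-convolution remainders; the It\^o isometry, the summability $\sum_k\|\sigma_k\|_\infty^2 < \infty$, and the uniform $\LL^2(\Xi; \LL^2([0,T]; \LL^2))$ bound on $g^{N,+}$ allow one to pass to the limit in probability. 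The main obstacle is the convective term: controlling the commutator between the empirical pairing $\langle \mu^{N,+}_s, \cdot\rangle$ and the mollifier $V^N$ in the presence of the nonlocal operator $K$ and the Lipschitz truncation $F$ relies essentially on the uniform H\"older regularity of $h^N$ and on the scaling condition in Assumption \ref{ass:assprinci}(5). Assembling all of the above convergences yields the weak formulation of Definition \ref{def:solution_of_system}, as claimed.
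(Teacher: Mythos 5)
Your overall strategy coincides with the paper's: pass to the limit term by term in \eqref{eq:g_with_test_function_first_version}, kill the idiosyncratic Brownian term by an $O(1/N)$ quadratic-variation bound, and reduce both the convective term and the transport-noise integrands to pairings against $g^{N,\pm}$ via a mollifier--commutator estimate of the form $\sup_x|f(x)-(V^N*f)(x)|\lesssim N^{-\tilde\eta\beta}$, using the uniform H\"older regularity of $K*g^{N,\pm}$ coming from $K:\HH^{\alpha}_p\to\HH^{\alpha+1}_p$ and the moment bounds. Your $R^N_1,R^N_2$ splitting is the same commutator the paper estimates in one pass, and your use of the It\^o isometry plus the uniform $\LL^2$ bounds in place of the paper's ``It\^o isometry in probability'' (Lemma \ref{lemma:ito_isometry_prob}) is a minor, legitimate variation.

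There is, however, one genuine gap: the final claim that the a.s.\ bound $\|K*(\omega^+-\omega^-)\|_{C([0,T]\times\T^2)}\lesssim\sup_s\|\omega^+_s-\omega^-_s\|_{\HH^{\eta}_p}$ ``lets one choose $M$ large enough so that $F$ acts as the identity along the limit trajectory.'' This does not work at this stage. First, $M$ is a parameter of $F$ fixed in the particle dynamics \eqref{eq:particle_system} \emph{before} any limit is taken, so it cannot be chosen as a function of the realized limit; second, the quantity $\sup_s\|\omega^+_s-\omega^-_s\|_{\HH^{\eta}_p}$ is a random variable with no deterministic a priori bound at this point, so no single finite $M$ dominates it on a set of full probability. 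The paper deliberately does \emph{not} remove the truncation here: its limit identity \eqref{eq_limit} retains $F$, so the passage to the limit only identifies $(\omega^+,\omega^-)$ as a solution of the \emph{truncated} system (Definition \ref{def:solution_of_system_truncated}); the truncation is lifted only later, after the uniqueness theorem of Section \ref{sec:uniqueness} identifies the limit with the regular solution $\overline{\rho}$, whose $\LL^{\infty}$ bound \eqref{eq:linfty_estimate} guarantees that $F$ is the identity along it for a suitable fixed $M$ depending only on $\|\omega_0\|_{\LL^{\infty}}$. You should either state your conclusion with $F$ still present, or import the uniqueness and $\LL^{\infty}$ machinery before discarding it.
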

We will prove the result for $\omega^+$ only, as the case for $\omega^-$ follows performing the same exact computations. For simplicity, we will omit the sign $+$ for the rest of the proof.

\begin{remark}
Consider the equation \eqref{eq:g_with_test_function_first_version}, and assume that \(g^{N}_t \xrightarrow[N \to \infty]{} \omega\) in \(C([0,T]; \HH^{\eta}_p(\T^2))\) almost surely. Let \(\phi \in C^{\infty}(\T^2)\). It is straightforward to notice that:
\[
\left\langle g_t^N, \phi\right\rangle \underset{N \rightarrow \infty}{\longrightarrow}\langle\omega_t, \phi\rangle, \quad\left\langle g_0^N, \phi\right\rangle \underset{N \rightarrow \infty}{\longrightarrow}\left\langle\omega_0, \phi\right\rangle.
\]
and
\[
\begin{aligned}
\mathbb{E}\left[\left|\frac{1}{N} \sum_{i=1}^N \int_0^t \nabla\left(V^N \star \phi\right)\left(X_s^{i, N}\right) d B_s^i\right|^2\right] & = \frac{1}{N^2} \sum_{i=1}^N \int_0^t \mathbb{E}\left[\left|\nabla\left(V^N \star \phi\right)\left(X_s^{i, N}\right) \right|^2\right] d s \\
& \leq \frac{t}{N} \cdot \|\nabla \phi\|_{\mathbb{L}^{\infty}}^2 \underset{N \rightarrow \infty}{\longrightarrow} 0.
\end{aligned}
\]
This follows from the fact that:
\[
\nabla\left(V^N \star \phi\right)\left(X_s^{i, N}\right) = V^N \star \nabla \phi\left(X_s^{i, N}\right),
\]
and for all \(x \in \T^2\):
\[
\left| V^N \star \nabla \phi (x) \right|^2 \leq \|\nabla \phi\|_{\mathbb{L}^{\infty}}^2 \left| \int_{\T^2} V^N(y) dy \right|^2 = \|\nabla \phi\|_{\mathbb{L}^{\infty}}^2.
\]
\end{remark}

Before proceeding with our proof, let us recall the following result (see equation (1.21) of \cite{nualart}):

\begin{lemma}
\label{lemma:ito_isometry_prob}
For any $\epsilon >0 $ and any $K>0$:
    \begin{equation}
        \prob \left[\left|\int_0^t f(s) d W_s\right|>\epsilon\right] \leq \prob \left[ \int_0^t f(s)^2 d s>K\right]+\frac{K}{\epsilon^2}
    \end{equation}
\end{lemma}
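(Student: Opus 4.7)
The plan is to combine a localization by a stopping time with Chebyshev's inequality and It\^{o}'s isometry. First, I would split the event $\{|\int_0^t f(s)\,dW_s| > \epsilon\}$ according to whether the quadratic variation $\int_0^t f(s)^2\,ds$ exceeds $K$ or not. This gives, without any probabilistic work,
\[
\prob\!\left[\left|\int_0^t f\,dW\right| > \epsilon\right] \leq \prob\!\left[\int_0^t f(s)^2\,ds > K\right] + \prob\!\left[\left|\int_0^t f\,dW\right| > \epsilon,\ \int_0^t f(s)^2\,ds \leq K\right],
\]
and the first summand already matches one of the two contributions on the right-hand side of the claim.

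To handle the second summand I would introduce the stopping time
\[
\tau := \inf\left\{s \geq 0 : \int_0^s f(u)^2\,du > K\right\} \wedge t,
\]
so that on the event $\{\int_0^t f(s)^2\,ds \leq K\}$ one has $\tau = t$ and therefore $\int_0^t f(s)\,dW_s = \int_0^{\tau} f(s)\,dW_s = \int_0^t f(s)\mathbf{1}_{\{s \leq \tau\}}\,dW_s$. This is the key technical move: the truncated integrand is predictable and its associated quadratic variation is deterministically bounded by $K$, which is precisely what lets It\^{o}'s isometry produce a bound of the correct order.

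Applying Chebyshev's inequality and then It\^{o}'s isometry to the stopped stochastic integral yields
\[
\prob\!\left[\left|\int_0^t f(s)\mathbf{1}_{\{s \leq \tau\}}\,dW_s\right| > \epsilon\right] \leq \frac{1}{\epsilon^2}\,\E\!\left[\int_0^t f(s)^2\mathbf{1}_{\{s \leq \tau\}}\,ds\right] \leq \frac{K}{\epsilon^2},
\]
and enlarging the event on the left-hand side from $\{|\int_0^t f\,dW| > \epsilon,\ \int_0^t f^2\,ds \leq K\}$ to the event on which this identification holds gives the same upper bound for the second summand. Combining both bounds yields the claim. The only point requiring care is the identification of the stochastic integral with its stopped version on the good event, which follows from the continuity in $s$ of $\int_0^s f(u)^2\,du$ together with the standard optional stopping property of the It\^{o} integral; beyond that the argument is routine.
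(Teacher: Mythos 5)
Your proof is correct: the localization by the stopping time $\tau$, the identification of $\int_0^t f\,dW_s$ with its stopped version on the event $\{\int_0^t f(s)^2\,ds \le K\}$, and the Chebyshev--It\^{o}-isometry step are all sound (the final bound uses that $\int_0^{\tau} f(s)^2\,ds \le K$ almost surely, which follows from the continuity of $s \mapsto \int_0^s f(u)^2\,du$ together with the strict inequality in the definition of $\tau$). The paper itself gives no proof of this lemma --- it only cites equation (1.21) of the reference --- and your argument is exactly the standard localization proof underlying that citation, so you have simply supplied the details the paper omits.
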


\begin{lemma}
Let $\phi$ be fixed as always. If $g^{N, +} \xrightarrow[N \to \infty]{} \omega^{+}$ and $g^{N, -} \xrightarrow[N \to \infty]{} \omega^{-}$ in $C([0,T]; \HH^{\eta}_p(\T^2))$ in probability, then for every $t \in [0,T]$:
\[
\sum\limits_{k \in \mathbb{N}} \int_0^t \left\langle \left( \sigma^k \mu_s^N \right) \star V^N , \nabla \phi \right\rangle dW^k_s \xrightarrow[N \to \infty]{\prob} \sum\limits_{k \in \mathbb{N}} \int_0^t \left\langle \omega_s , \sigma^k \cdot \nabla \phi \right\rangle dW^k_s.
\]
\end{lemma}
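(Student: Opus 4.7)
The plan is to combine Lemma \ref{lemma:ito_isometry_prob} with the summability assumptions on $(\sigma_k)$. First observe that by independence of the Brownian motions $(W^k)$, the stochastic series
\[
M^N_t := \sum_{k} \int_0^t J^N_k(s)\, dW^k_s, \qquad J^N_k(s) := \left\langle (\sigma^k \mu_s^N)*V^N, \nabla\phi \right\rangle,
\]
is a square-integrable martingale with quadratic variation $\int_0^t \sum_k |J^N_k(s)|^2\, ds$, and similarly for the candidate limit $M_t$ with integrand $J_k(s) := \langle \omega(s,\cdot), \sigma^k\cdot\nabla\phi\rangle$. The vector-valued extension of Lemma \ref{lemma:ito_isometry_prob} (with $K=\epsilon^3$) reduces the problem to showing
\[
\int_0^t \sum_{k} \left| J^N_k(s) - J_k(s) \right|^2 ds \xrightarrow[N\to\infty]{\prob} 0.
\]

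I would then split $J^N_k(s) - J_k(s) = E^1_k(s) + E^2_k(s)$ with
\[
E^1_k(s) := \left\langle (\sigma^k\mu^N_s)*V^N,\nabla\phi\right\rangle - \left\langle g^N_s, \sigma^k\cdot\nabla\phi\right\rangle,\qquad E^2_k(s) := \left\langle g^N_s - \omega(s,\cdot), \sigma^k\cdot\nabla\phi\right\rangle.
\]
The first term is a commutator between convolution with $V^N$ and multiplication by $\sigma^k$: expanding both pairings as double integrals against $\mu^N_s$ gives
\[
E^1_k(s) = \int\!\!\int V^N(x-y)\bigl[\sigma^k(y)-\sigma^k(x)\bigr]\cdot\nabla\phi(x)\, dx\, d\mu^N_s(y).
\]
Using $|\sigma^k(y)-\sigma^k(x)|\le \|\nabla\sigma^k\|_\infty |y-x|$, the scaling $V^N(x)=N^{2\beta}V(N^\beta x)$, and $\int|V^N(u)||u|\,du = N^{-\beta}\int|V(v)||v|\,dv$, together with $\|\mu^N_s\|_{TV}=\Gamma^\pm$, I obtain the deterministic bound $|E^1_k(s)|\le C N^{-\beta}\|\nabla\sigma^k\|_\infty\|\nabla\phi\|_\infty$. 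Summing in $k$ and integrating in $s$ uses the hypothesis $\sum_k\|\nabla\sigma^k\|_\infty^2<\infty$ to conclude $\int_0^t\sum_k |E^1_k(s)|^2 ds \le C t\, N^{-2\beta}\|\nabla\phi\|_\infty^2 \sum_k \|\nabla\sigma^k\|_\infty^2 \to 0$.

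For $E^2_k(s)$, duality and the Sobolev embedding $\HH^\eta_p\hookrightarrow \LL^p$ yield
\[
|E^2_k(s)| \le \|g^N_s - \omega(s,\cdot)\|_{\LL^p} \|\sigma^k\cdot\nabla\phi\|_{\LL^{p'}} \le C \|g^N_s-\omega(s,\cdot)\|_{\HH^\eta_p} \|\sigma^k\|_\infty \|\nabla\phi\|_{\LL^{p'}}.
\]
Squaring and summing brings in $\sum_k\|\sigma^k\|_\infty^2<\infty$, giving
\[
\int_0^t \sum_k |E^2_k(s)|^2 ds \le C t \left\| g^N - \omega \right\|_{C([0,T];\HH^\eta_p)}^2 \xrightarrow[N\to\infty]{\prob} 0
\]
by the hypothesis on $g^N$. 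Combining the two bounds, the sum in $k$ of the squared integrands tends to zero in probability, which, by the reduction in the first paragraph, yields the claim. The main obstacle is the correct use of both summability conditions in Assumption \ref{ass:assprinci}: the $\|\sigma^k\|_\infty^2$ condition powers the duality bound for $E^2_k$, while the $\|\nabla\sigma^k\|_\infty^2$ condition powers the commutator bound for $E^1_k$; without either of them the series over $k$ would not be controllable uniformly in $N$.
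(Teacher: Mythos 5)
Your proof is correct, and it follows the same high-level strategy as the paper: both reduce the convergence of the stochastic integrals to convergence in probability of the quadratic variation via Lemma \ref{lemma:ito_isometry_prob} (your observation that the lemma extends to the series $\sum_k \int_0^t f_k\, dW^k_s$ with quadratic variation $\int_0^t\sum_k|f_k|^2ds$, by independence of the $W^k$, is the standard stopping-time argument and is legitimate). Where you genuinely diverge is in the decomposition of the integrand error. The paper splits into three pieces: first it rewrites $\langle(\sigma^k\mu^N_s)*V^N,\nabla\phi\rangle=\langle\mu^N_s,(V^N*\nabla\phi)\cdot\sigma^k\rangle$, compares $\mu^N_s$ with $g^N_s$ via a $\sup_x$ commutator bound controlled by the H\"older seminorm of $\sigma^k$ (yielding a rate $N^{-\tilde\eta\beta}$), and then handles $V^N*\nabla\phi-\nabla\phi$ and $g^N_s-\omega_s$ separately through $\HH^{-\eta}_p$/$\HH^{\eta}_p$ duality and Lemma \ref{lemma:approx_identity}. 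You instead collapse the first two of these into the single commutator $E^1_k$, which you bound deterministically by $CN^{-\beta}\|\nabla\sigma^k\|_\infty\|\nabla\phi\|_\infty$ using the Lipschitz bound on $\sigma^k$ and the scaling of $V^N$ — this is cleaner, gives a slightly better explicit rate, and avoids the negative-order Sobolev norms entirely for that piece; your $E^2_k$ then needs only elementary $\LL^p$--$\LL^{p'}$ duality rather than the $\HH^{\pm\eta}_p$ pairing. A further point in your favour is that you track the summation over $k$ explicitly, invoking $\sum_k\|\nabla\sigma_k\|_\infty^2<\infty$ for $E^1$ and $\sum_k\|\sigma_k\|_\infty^2<\infty$ for $E^2$, whereas the paper's estimates are carried out for a single $k$ and the uniformity of the series is left implicit. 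Both routes are valid; yours is the more economical of the two.
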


\begin{proof} First of all let us notice that:
\[
\sum\limits_{k \in \mathbb{N}} \int_0^t \left\langle \left( \sigma^k \mu_s^N \right) \star V^N , \nabla \phi \right\rangle dW^k_s = \sum\limits_{k \in \mathbb{N}} \int_0^t \left\langle \mu_s^N , \left( V^N \star \nabla \phi \right) \cdot \sigma^k \right\rangle dW^k_s.
\]
This is because:
\[
\begin{aligned}
    \left\langle \left( \sigma^k \mu_s^N \right) \star V^N , \nabla \phi \right\rangle & = \int_{\T^2} \frac{1}{N}\sum\limits_{i=1}^N \sigma^k(X^{i,N}_s)V^N(x-X^{i,N}_s) \nabla \phi(x) dx = \\
    & = \frac{1}{N}\sum\limits_{i=1}^N \sigma^k(X^{i,N}_s) \int_{\T^2} V^N(X^{i,N}_s-x) \nabla \phi(x) dx = \\
    & = \frac{1}{N}\sum\limits_{i=1}^N \sigma^k(X^{i,N}_s) \left( V^N \star \nabla \phi \right) (X^{i,N}_s) = \left\langle \mu_s^N, \left( V^N \star \nabla \phi \right) \cdot \sigma^k \right\rangle.
\end{aligned}
\]
Then, it is sufficient to show that:
\begin{equation}
\label{eq:limit_noise_term}
\prob \left[ \left|  \sum\limits_{k \in \mathbb{N}} \int_0^t \left\langle \mu_s^N , \left( V^N \star \nabla \phi \right) \cdot \sigma^k \right\rangle dW^k_s -\sum\limits_{k \in \mathbb{N}} \int_0^t \left\langle g^N_s , \left( V^N \star \nabla \phi \right) \cdot \sigma^k \right\rangle dW^k_s \right| > \epsilon \right] \xrightarrow[N \to \infty]{} 0
\end{equation}
In fact, assuming \eqref{eq:limit_noise_term} and applying Lemma \ref{lemma:ito_isometry_prob} with $K=\epsilon^3$, we deduce that 
\begin{equation}
\label{eq:prob_ito_integral}
\begin{aligned}
& \prob \left[ \left| \int_0^t \left\langle g^N_s , \sigma^k \cdot \left( V^N \star \nabla \phi \right) \right\rangle dW^k_s -  \int_0^t \left\langle \omega_s , \sigma^k \cdot \nabla \phi \right\rangle dW^k_s \right|>\epsilon \right] = \\
& \underset{\text{Lemma}}{\leq} \prob \left[ \int_0^t \left| \left\langle g^N_s , \sigma^k \cdot \left( V^N \star \nabla \phi \right) \right\rangle - \left\langle \omega_s , \sigma^k \cdot \nabla \phi \right\rangle \right|^2 ds > \epsilon^3 \right]  + \epsilon \leq \\
& \leq \prob \left[ \int_0^t \left| \left\langle g^N_s , \sigma^k \cdot \left( V^N \star \nabla \phi - \nabla \phi \right) \right\rangle \right|^2 ds > \frac{\epsilon^3}{2}\right] + \prob \left[ \int_0^t \left| \left\langle g^N_s - \omega_s , \sigma^k \cdot \nabla \phi \right\rangle \right|^2 ds > \frac{\epsilon^3}{2} \right] + \epsilon.
\end{aligned}
\end{equation}

Let us prove now that the last two terms in the equation \eqref{eq:prob_ito_integral} converge to $0$. First of all:
\[
\begin{aligned}
    \prob \left[ \int_0^t \left| \left\langle g^N_s , \sigma^k \cdot \left( V^N * \nabla \phi - \nabla \phi \right) \right\rangle \right|^2 ds > \frac{\epsilon}{2}\right] & \leq \prob \left[ \left\| \sigma^k \cdot \left( V^N * \nabla \phi - \nabla \phi \right) \right\|^2_{\HH^{-\eta}_p(\T^2)} \int_0^t \left\|g^N_s\right\|^2_{\HH^{\eta}_p(\T^2)}ds > \frac{\epsilon}{2}\right] \\
\end{aligned}
\]
and using $C([0,T], \HH^{\eta}_p(\T^2)) \subset \LL^2([0,T], \HH^{\eta}_p(\T^2))$ it is sufficient to bound:
\[
\begin{aligned}
    & \prob \left[ \left\| \sigma^k \cdot \left( V^N * \nabla \phi - \nabla \phi \right) \right\|^2_{\HH^{-\eta}_p(\T^2)} \left\|g^N_s\right\|^2_{C\left([0,T], \HH^{\eta}_p(\T^2)\right)}ds > \frac{\epsilon}{2 C }\right] \leq \\
    & \leq \prob \left[ \left\| \sigma^k \cdot \left( V^N * \nabla \phi - \nabla \phi \right) \right\|^2_{\HH^{-\eta}_p(\T^2)} \left\|g^N_s-\omega \right\|^2_{C\left([0,T], \HH^{\eta}_p(\T^2)\right)}ds > \frac{\epsilon}{2 C }\right] \\
    & + \prob \left[ \left\| \sigma^k \cdot \left( V^N * \nabla \phi - \nabla \phi \right) \right\|^2_{\HH^{-\eta}_p(\T^2)} \left\|\omega\right\|^2_{C\left([0,T], \HH^{\eta}_p(\T^2)\right)}ds > \frac{\epsilon}{2 C }\right] \xrightarrow[N \to \infty]{} 0.
\end{aligned}
\]
The last term goes to zero because of the assumption that $\left\|g^N -\omega\right\|^2_{C\left([0,T], \HH^{\eta}_p(\T^2)\right)} \to 0$ in probability and because $\left\| \sigma^k \cdot \left( V^N \star \nabla \phi - \nabla \phi \right) \right\|^2_{\HH^{-\eta}_p(\T^2)} \xrightarrow[N \to \infty]{} 0$ as a consequence of $V^N$ being an approximation of the identity.

For the second term in equation \eqref{eq:prob_ito_integral}, notice that:
\[
\begin{aligned}
    & \prob \left[ \int_0^t \left| \left\langle g^N_s - \omega_s , \sigma^k \cdot \nabla \phi \right\rangle \right|^2 ds > \frac{\epsilon}{2}\right] \leq \prob \left[ \left\| \sigma^k \cdot \nabla \phi \right\|^2_{\HH^{-\eta}_p(\T^2)} \int_0^t \left\|g^N_s - \omega_s\right\|^2_{\HH^{\eta}_p(\T^2)}ds > \frac{\epsilon}{2}\right] \\
\end{aligned}
\]
As above, because $C([0,T], \HH^{\eta}_p(\T^2)) \subset \LL^2([0,T], \HH^{\eta}_p(\T^2))$ one has:
\[
\begin{aligned}
    & \prob \left[ \left\| \sigma^k \cdot \nabla \phi \right\|^2_{\HH^{-\eta}_p(\T^2)} \left\|g^N_s-\omega_s\right\|^2_{C\left([0,T], \HH^{\eta}_p(\T^2)\right)}ds > \frac{\epsilon}{2 C }\right] \\
    & = \prob \left[ \left\|g^N_s-\omega_s\right\|^2_{C\left([0,T], \HH^{\eta}_p(\T^2)\right)}ds > \frac{\epsilon}{2 C \left\| \sigma^k \cdot \nabla \phi \right\|^2_{\HH^{-\eta}_p(\T^2)}}\right] \xrightarrow[N \to \infty]{} 0.
\end{aligned}
\]

Now, to prove \eqref{eq:limit_noise_term}, we apply Lemma \ref{lemma:ito_isometry_prob} to get:

\begin{equation}
\begin{aligned}
    & \prob \left[ \left|  \sum\limits_{k \in \mathbb{N}} \int_0^t \left\langle \mu_s^N , \left( V^N \star \nabla \phi \right) \cdot \sigma^k \right\rangle dW^k_s -\sum\limits_{k \in \mathbb{N}} \int_0^t \left\langle g^N_s , \left( V^N \star \nabla \phi \right) \cdot \sigma^k \right\rangle dW^k_s \right| > \epsilon \right] \leq \\
    & \leq \prob \left[ \sum\limits_{k \in \mathbb{N}} \int_0^t \left| \left\langle \mu_s^N , \left( V^N \star \nabla \phi \right) \cdot \sigma^k \right\rangle - \left\langle g^N_s , \left( V^N \star \nabla \phi \right) \cdot \sigma^k \right\rangle \right|^2 ds > \epsilon^3 \right] + \epsilon,
    \end{aligned}
\end{equation}
and therefore it is sufficient to show that the right hand side of the following inequality converges to $0$:
\begin{equation}
\label{eq:limit_stochastic_term}
\begin{aligned}
& \left| \left\langle \mu_s^N , \left( V^N \star \nabla \phi \right) \cdot \sigma^k \right\rangle - \left\langle \mu_s^N \star V^N, \left( V^N \star \nabla \phi \right) \cdot \sigma^k \right\rangle \right| \\
&\leq \sup\limits_{x \in \T^2} \left| \left( V^N \star \nabla \phi \right) \cdot \sigma^k(x) - \left( \left( V^N \star \nabla \phi \right) \cdot \sigma^k \right) \star V^N (x) \right|.
\end{aligned}
\end{equation}
Now:
\[
\begin{aligned}
& \left| \sigma^k(x) \cdot \nabla\left(V^N \star \phi\right)(x)-\left( \left(\sigma^k \cdot \nabla\left(V^N \star \phi\right)\right) \star V^N\right)(x) \right| \\
& \stackrel{\left(\int V^N=1\right)}{\leq} \int_{\T^2} V(y)\left\|\nabla\left(V^N \star \phi\right)(x)\right\|\left\|\sigma^k(x)-\sigma^k\left(x-\frac{y}{N^\beta}\right)\right\| d y \\
& +\int_{\T^2} V(y)\left\|\nabla\left(V^N \star \phi\right)(x)-\nabla\left(V^N \star \phi\right)\left(x-\frac{y}{N^\beta}\right)\right\|\left\| \sigma^k(x)\right\| d y \\
& \leq \int_{\T^2} V(y)\left\|\nabla\left(V^N \star \phi\right)(x)\right\|\left\|\sigma^k(x)-\sigma^k\left(x-\frac{y}{N^\beta}\right)\right\| d y +\frac{C}{N^\beta} \int_{\T^2} V(y)\|y\| d y  \leq \\
& \leq \frac{1}{N^{\tilde{\eta} \beta}} \sup _{x, y \in \T^2} \frac{\left\|\sigma^k(x)-\sigma^k(y)\right\|}{\|x-y\|^{\tilde{\eta}}} \int_{\T^2} V(y)\|y\|^{\tilde{\eta}} d y + \frac{1}{N^\beta} \int_{\T^2} V(y)\|y\| d y \xrightarrow[N \to \infty]{} 0,
\end{aligned}
\]
for any $0<\tilde{\eta}<1$.
Now, the term:
\[
\sup _{x, y \in \T^2} \frac{\left\|\sigma^k(x)-\sigma^k(y)\right\|}{\|x-y\|^{\tilde{\eta}}} < +\infty
\]
is finite because $\sigma^k \in C^{\infty}(\T^2)$ and because we assumed a uniform bound on the Lipschitz constants of $\sigma^k$.
\end{proof}

\begin{lemma}
Let $\phi$ be fixed as always. If $g^{N, +} \xrightarrow[N \to \infty]{} \omega^{+}$ and $g^{N, -} \xrightarrow[N \to \infty]{} \omega^{-}$ in $C([0,T]; \HH^{\eta}_p(\T^2))$ in probability, then it holds that:
\begin{equation}
\label{eq_limit}
\lim _{N \rightarrow \infty} \int_0^t\left\langle \mu_s^{N, +}, F\left( K \star g^N_s \right) \cdot \nabla\left(V^N \star \phi\right)\right\rangle d s=\int_0^t \int_{\T^2} \omega^{+}_s(x) F \left( K \star \omega_s(x) \right) \cdot \nabla \phi(x) d x d s .
\end{equation}
\end{lemma}
\begin{proof}
We have that 
\begin{equation}
\label{eq:making_limit_easier}
\begin{aligned}
& \left|\left\langle \mu_s^{N, +}, F \left( K \star g_s^{N} \right) \cdot \nabla\left(V^N \star \phi\right)\right\rangle-\left\langle g_s^{N, +}, F\left(K \star g_s^{N}\right) \cdot \nabla\left(V^N \star \phi\right)\right\rangle\right| \\
& \quad \leq \sup_{x \in \mathbb{T}^2} \left| F\left( K \star g_s^{N}\right) \cdot \nabla\left(V^N \star \phi\right)(x)-\left( \left(F\left( K \star g_s^{N} \right) \cdot \nabla\left(V^N \star \phi \right) \right) \star V^N \right) (x) \right|
\end{aligned}
\end{equation}
because $g_t^{N, +}= V^N \star \mu_t^{N, +}$.

We start by controlling the last term:
\begin{equation}
\begin{aligned}
& \left| F\left( K \star g_s^{N}\right)(x) \cdot \nabla\left(V^N \star \phi\right)(x)-\left( \left(F\left( K \star g_s^{N} \right) \cdot \nabla\left(V^N \star \phi\right)\right) \star V^N\right)(x) \right| \\
& \stackrel{\left(\int V=1\right)}{\leq} \int_{\T^2} V(y)\left\|\nabla\left(V^N \star \phi\right)(x)\right\|\left\|F\left( K \star g_s^{N}\right)(x)-F\left( K \star g_s^{N}\right)\left(x-\frac{y}{N^\beta}\right)\right\| d y \\
& +\int_{\T^2} V(y)\left\|\nabla\left(V^N \star  \phi\right)(x)-\nabla\left(V^N \star \phi\right)\left(x-\frac{y}{N^\beta}\right)\right\|\left\| F \left( K \star g_s^{N}  \right)(x)\right\| d y \\
& \stackrel{\left(F \in \operatorname{Lip} \cap L^{\infty}\right)}{\leq} C \int_{\T^2} V(y)\left\|\nabla\left(V^N \star  \phi\right)(x)\right\|\left\|\left( K \star g_s^{N}\right)(x)-\left( K \star g_s^{N} \right) \left(x-\frac{y}{N^\beta}\right)\right\| d y \\
& +\frac{C}{N^\beta} \int_{\T^2} V(y)\|y\| d y \\
& \leq \frac{C}{N^{\tilde{\eta} \beta}} \sup _{x, y \in \T^2} \frac{\left\|K \star g_s^{N, +}(x)-K \star g_s^{N, +}(y)\right\|}{\|x-y\|^{\tilde{\eta}}} + \frac{C}{N^{\tilde{\eta} \beta}} \sup _{x, y \in \T^2} \frac{\left\|K \star g_s^{N, -}(x)-K \star g_s^{N, -}(y)\right\|}{\|x-y\|^{\tilde{\eta}}} + \int_{\T^2} V(y)\|y\|^{\tilde{\eta}} d y \\
& +\frac{C}{N^\beta} \int_{\T^2} V(y)\|y\| d y, \\
&
\end{aligned}
\end{equation}
For the first inequality we exploited that $\int V = 1$ and we added and subtracted the term:
\[
\nabla\left(V^N \star \phi\right)(x) \cdot F\left(K \star g_s^N\right)(x)
\]

Therefore we have obtained
$$
\left.\mid F\left(K \star g_s^N\right)(x) \cdot \nabla\left(V^N \star \phi\right)(x)-\left(F\left(K \star g_s^N\right) \cdot \nabla\left(V^N \star \phi\right)\right) \star V^N\right)(x) \mid \leq \frac{C}{N^{\tilde{\eta} \beta}},
$$
where the constant $C$ depends on $\left\|K \star g^{N, \pm}\right\|_{C^{\tilde{\eta}}\left(\mathbb{R}^2\right)}$. Thus,
\begin{equation}
\begin{aligned}
\lim _{N \rightarrow \infty} \int_0^t\left\langle \mu_s^N, F\left(K \star g_s^N\right)\right. & \left.\cdot \nabla\left(V^N \star \phi\right)\right\rangle d s \\
& =\lim _{N \rightarrow \infty} \int_0^t\left\langle g_s^N, F\left(K \star g_s^N\right) \cdot \nabla\left(V^N \star \phi\right)\right\rangle d s \\
& =\lim _{N \rightarrow \infty} \int_0^t \int_{\mathbb{R}^2} g_s^N(x) F\left(K \star g_s^N\right)(x) \cdot \nabla\left(V^N \star \phi\right)(x) d x d s \\
& =\int_0^t \int_{\mathbb{R}^2} \omega(s, x) F(K \star \omega_s)(x) \cdot \nabla \phi(x) d x d s,
\end{aligned}
\end{equation}

where in the last equality we used that $g_s^N \rightarrow \omega$ with respect to the strong topology of $\mathbb{L}^2\left([0, T]; C\left(\mathbb{T}^2\right)\right)$ in probability. In fact, because of section 2.8.1 of \cite{triebel}, $\HH^{\eta}_p\left(\T^2\right)) \subset C(\T^2)$, and because if $g^N$ converges strongly to $\omega$ in $C([0,T], \HH^{\eta}_p\left(\T^2\right))$, it converges strongly also in $\mathbb{L}^2\left([0, T]; C\left(\mathbb{T}^2\right)\right)$. We have proven equation \eqref{eq_limit}.
\end{proof}


\section{Lifting the truncation}
\label{sec:lifting}
\subsection{Existence and uniqueness of the 2d-SNS}

Let $f_R$ be equal to 1 on $[0, R]$, equal to 0 on $[R+1, \infty)$, and decreasing on $[R, R+1]$, for arbitrary $R>0$. We then have the following Proposition:

\begin{prop}
Let $h$ be a positive integer. Let $\omega_0 \in \HH^h_2(\T^2)$, then the following equation:
\begin{equation}
\label{eq:SNS}
d \omega_t^R+f_R\left(\left\|\omega_t^R\right\|_{h-1,2}\right) u_t^R \cdot \nabla \omega_t^R d t+\sum_{k = 1}^{\infty} \sigma_k \cdot \nabla \omega_t^R \circ d W_t^k= \nu \Delta \omega^{R}_t dt
\end{equation}
admits a unique global $\left(\mathcal{F}_t\right)_t$-adapted solution $\omega^R=\left\{\omega_t^R, t \in[0, \infty)\right\}$ with values in the space $C\left([0, \infty), \HH^{h-1}_2(\T^2)\right)$. In particular, if $h \geq 5$, the solution is classical.
\end{prop}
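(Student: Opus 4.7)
The plan is to follow the pattern of \cite{crisan2020wellposedness}: first produce a solution to the truncated Stratonovich equation by a Galerkin scheme, deriving uniform high-regularity estimates that exploit precisely the structure provided by the cutoff $f_R$, then pass to the limit and argue uniqueness by a standard Gronwall estimate on the difference of two solutions.

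Concretely, first I would rewrite the equation in It\^{o} form, producing the correction term $\frac12 \sum_k (\sigma^k\cdot\nabla)(\sigma^k\cdot\nabla)\omega^R$ whose worst contribution is of second-order and, using $\div \sigma^k=0$, can be written in divergence form and absorbed against the viscous term under the smallness assumption $\sum_k \|\sigma_k\sigma_k^T\|_\infty<C_\nu$ from the paper's hypotheses. I would then project onto $V_n:=\mathrm{span}\{e_k:|k|\le n\}$ (Fourier modes), obtaining a finite-dimensional SDE with locally Lipschitz coefficients. Because $f_R$ vanishes outside $\{\|\omega\|_{k-1,2}\le R+1\}$, the transport term $f_R(\|\omega\|_{k-1,2})(K*\omega)\cdot\nabla\omega$ becomes Lipschitz in $\mathcal{W}^{k-1,2}$, so by classical SDE theory each Galerkin SDE admits a unique global strong solution $\omega^{R,n}$.

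The heart of the proof is the uniform $\mathcal{W}^{k-1,2}$ estimate. Applying It\^{o}'s formula to $\|(I-\Delta)^{(k-1)/2}\omega^{R,n}_t\|_{L^2}^2$, the three kinds of terms one must control are (i) the transport term, where the cutoff and a commutator estimate of the form $\|[(I-\Delta)^{(k-1)/2}, u\cdot\nabla]\omega\|_{L^2} \lesssim \|u\|_{\mathcal W^{k-1,2}}\|\omega\|_{\mathcal W^{k-1,2}}$ combined with the Biot–Savart bound $\|u\|_{\mathcal W^{k-1,2}} \lesssim \|\omega\|_{\mathcal W^{k-2,2}}$ (Proposition \ref{prop:properties_of_K}) gives a bound by $C_R \|\omega^{R,n}\|_{\mathcal W^{k-1,2}}^2$; (ii) the corrector together with the Laplacian, which together remain coercive by the smallness of $\sum_k\|\sigma_k\sigma_k^T\|_\infty$; and (iii) the martingale part, handled via the Burkholder–Davis–Gundy inequality. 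These yield a Gronwall inequality in expectation for $\mathbb{E}\sup_{s\le t}\|\omega^{R,n}_s\|_{\mathcal W^{k-1,2}}^2$ with a bound independent of $n$, and a parallel estimate at order $k-2$ for the time increments produces equicontinuity in $\mathcal W^{k-2,2}$. A tightness argument in $C([0,T];\mathcal W^{k-2,2})\cap L^\infty(0,T;\mathcal W^{k-1,2})$ together with Skorokhod and martingale representation (or, since coefficients are Lipschitz after truncation, a direct Picard contraction in $C([0,T];\mathcal W^{k-1,2})$) yields a global solution with the claimed regularity.

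For uniqueness I would subtract two solutions $\omega^R$, $\tilde\omega^R$ and apply It\^{o}'s formula to $\|\omega^R_t-\tilde\omega^R_t\|_{\mathcal W^{k-1,2}}^2$; the Lipschitz nature of the truncated drift and the antisymmetric structure of the transport noise (which, in Stratonovich form, produces a martingale plus a manageable correction because $\div\sigma_k=0$) yield $\mathbb{E}\|\omega^R_t-\tilde\omega^R_t\|_{\mathcal W^{k-1,2}}^2 \le C_R\int_0^t \mathbb{E}\|\omega^R_s-\tilde\omega^R_s\|_{\mathcal W^{k-1,2}}^2\,ds$, and Gronwall closes the argument. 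Finally, when $k\ge 5$ the Sobolev embedding $\mathcal W^{k-1,2}(\mathbb{T}^2)\hookrightarrow C^2(\mathbb{T}^2)$ gives classical spatial regularity, and temporal continuity of $\omega^R$ with values in a space of $C^2$ functions makes the equation hold pointwise in the classical sense. The main obstacle, as usual for transport-noise SPDEs at higher regularity, is closing the estimate at the level $k-1$ without losing a derivative from the noise term — this is where the divergence-free condition on $\sigma_k$ and the smallness condition $\sum_k\|\sigma_k\sigma_k^T\|_\infty<C_\nu$ are essential, precisely the setting used in \cite{crisan2020wellposedness}.
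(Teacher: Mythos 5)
Your construction is essentially sound, but it is not the route the paper takes: the paper does not run a Galerkin scheme at all. It observes that the truncated equation is a particular case of the abstract framework of equations (1.1)--(1.2) in Chapter 4 of \cite{Stochastic_Evolution_Systems}, checks that the hypotheses of Theorems 1 and 2 there are satisfied (the cutoff $f_R$ rendering the nonlinearity globally Lipschitz at the relevant regularity, the noise being first-order with divergence-free, uniformly summable coefficients), and simply invokes those theorems to get a unique solution in $L^2([0,T];\mathcal{W}^{k,2})\cap C([0,T];\mathcal{W}^{k-1,2})$; the classical regularity for $k\ge 5$ then follows by Sobolev embedding exactly as you say. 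What you propose is, in effect, a self-contained proof of the cited theorem, which is legitimate and more transparent about where the structure of the noise enters, at the cost of considerably more work. Two points in your sketch deserve correction, though neither is fatal to the Galerkin route. First, the high-order energy estimate does not close because of the smallness condition $\sum_k\|\sigma_k\sigma_k^T\|_\infty<C_\nu$: the dangerous second-order It\^o corrector $\langle\Lambda^{k-1}\omega,\Lambda^{k-1}(\sigma^k\cdot\nabla)^2\omega\rangle$ is cancelled against the quadratic-variation term $\|\Lambda^{k-1}(\sigma^k\cdot\nabla\omega)\|_{L^2}^2$ produced by It\^o's formula for the squared norm, via the integration-by-parts identity recorded in Lemma \ref{lemma:krylov_integration_by_parts}, which holds with no smallness whatsoever; the smallness condition is used in this paper only to guarantee that $A=\nu\Delta+\sum_k(\sigma^k)^TH(\cdot)\sigma^k$ generates an analytic semigroup. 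Second, your parenthetical claim that one could instead run a direct Picard contraction in $C([0,T];\mathcal{W}^{k-1,2})$ because the truncated coefficients are Lipschitz is too quick: $f_R(\|\omega\|_{k-1,2})\,u\cdot\nabla\omega$ maps $\mathcal{W}^{k-1,2}$ into $\mathcal{W}^{k-2,2}$, losing a derivative, so it is not Lipschitz as a map of $\mathcal{W}^{k-1,2}$ into itself; a fixed-point argument would have to recover that derivative through parabolic smoothing, which is precisely what the energy-method/Galerkin route (or the cited abstract coercivity theorems) is designed to avoid.
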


The proof of this Proposition follows from Theorem 1 and 2 in \cite{Stochastic_Evolution_Systems}.

\begin{prop}
The solution to the equation \eqref{eq:SNS} is global.
\end{prop}
\begin{proof}
Define $\tau_R(\omega):=\inf _{t \geq 0}\left\{\left\|\omega_t^R\right\|_{h-1,2} \geq R\right\}$. Observe that on $\left[0, \tau_R\right], f_R\left(\left\|\omega_t^R\right\|_{k-1,2}\right)=1$, and therefore, on $\left[0, \tau_R\right]$ the solution of the truncated equation \eqref{eq:SNS} is, in fact a solution of \eqref{eq:main_equation}. It therefore makes sense to define the process $\omega=\left\{\omega_t, t \in[0, \infty)\right\}$ where $\omega_t=\omega_t^R$ for $t \in\left[0, \tau_R\right]$. This definition is consistent as, following the uniqueness property of the solution of the truncated equation (see \cite{Stochastic_Evolution_Systems}), $\omega_t^R=\omega_t^{R^{\prime}}$ for $t \in\left[0, \tau_{\min \left(R, R^{\prime}\right)}\right]$.

The process $\omega$ defined this way is a solution of the \eqref{eq:main_equation} on the interval $\left[0, \sup _R \tau_R\right)$. To obtain a global solution we need to prove that $\sup _{R>0} \tau_R =\infty$ almost surely. Let $\mathscr{A}:=\left\{\omega \in \Omega \mid \sup _{R>0} \tau_R(\omega)<\infty\right\}$. Then

$$
\mathscr{A}=\bigcup_{N>0}\left\{\sup _R \tau_R(\omega) \leq N\right\}=\bigcup_N \bigcap_R\left\{\left|\tau_R(\omega)\right| \leq N\right\},
$$

and

$$
\mathbb{P}\left(\left|\tau_R(\omega)\right| \leq N\right)=\mathbb{P}\left(\sup _{t \in[0, N]}\left\|\omega_t^R\right\|_{h-1,2}>R\right).
$$
In order to finish the proof of global existence we use the analogous of Lemma 24 in \cite{crisan2020wellposedness} and the fact that
$$
\mathbb{P}\left(\left\|\omega_t^R\right\|_{k-1,2}>R\right) \leq \frac{\mathbb{E}\left[\ln \left(\left\|\omega_t^R\right\|_{k-1,2}^2+e\right)\right]}{R^2+e} \leq \frac{\mathcal{C}\left(\omega_0, T\right)}{R^2+e} \xrightarrow[R \rightarrow \infty]{ } 0.
$$
It follows that
$$
\mathbb{P}\left(\bigcap_R\left\{\left|\tau_R(\omega)\right| \leq N\right\}\right)=\lim _{R \rightarrow \infty} \mathbb{P}\left(\left|\tau_R(\omega)\right| \leq N\right)=0
$$
and therefore $\mathbb{P}(\mathscr{A})=0$. This concludes the global existence for the solution of the equation \eqref{eq:main_equation}.
\end{proof}

\subsection{The limit of $(g^{N, +}, g^{N, -})$ coincides with the solution to the 2d-SNN}

Let $(\omega^+, \omega^-)$ be the unique solution in the sense of Definition \ref{def:solution_of_system}, and let $\xi$ be an accumulation point of $g^N$, in other words let $(g^{N, +}, g^{N, -})$ converge to $(\xi^+, \xi^-)$ in $C([0, T], \HH^{\eta}_p(\T^2))^2$ in probability. Our goal is to prove the following result:
\begin{prop}
\label{prop:uniqueness}
The following holds:
\[
\prob \left[ \norm{(\omega^+, \omega^-) - (\xi^+, \xi^-)}_{\HH^{-1+\epsilon}_2(\T^2)^2} = 0, \forall t \in [0, T]\right] = 1
\]
\end{prop}

\begin{proof}
We follow a standard argument. Let:
\[
\left\{e_k | e_k(x) := \frac{1}{(2 \cdot \pi)^{\frac{3}{2}}} e^{i <k, x>}, m \in \mathbb{Z}^2, x \in [-\pi, \pi]^2\right\},
\]
be an Hilbert basis for $\HH^{-1+\epsilon}_2(\T^2)$. Let us prove the Proposition for $\xi^+$. The case $g^{N, -}$ is analogous.

Now:
\[
\norm{\xi^+_t - \omega^+_t}_{\HH^{-1+\epsilon}_2(\T^2)}^2 = \sum\limits_{m \in \Z} \left(1+|m|^2\right)^{(-2+2\epsilon)/2}\left|\left\langle \xi^+_t - \omega^+_t , e_m\right\rangle\right|^2.
\]
Let us use It\^{o} formula to obtain:
\[
\left|\left\langle \xi^+_t - \omega^+_t , e_m\right\rangle\right|^2 =\sum_{j=1}^4 \int_0^t J_j^m(s) \mathrm{d} s+\sum_{j=5}^6 \int_0^t J_j^m(s) \mathrm{d} W_s^k,
\]
where:
\[
\begin{aligned}
J_1^m(s) &= \left\langle \xi^+_s - \omega^+_s, e_{-m}\right\rangle\left\langle \xi^+_s -\omega^+_s, \Delta e_m\right\rangle \\
&- \left\langle \xi^+_s - \omega^+_s, e_{-m}\right\rangle\left\langle \left( F(K \star \xi^+_s) - F(K \star \omega^+_s) \right) \xi^+_s, \nabla e_m\right\rangle \\
&- \left\langle \xi^+_s - \omega^+_s, e_{-m}\right\rangle\left\langle F \left( K \star \omega^+_s\right) \left( \xi^+_s - \omega^+_s \right), \nabla e_m\right\rangle,
\end{aligned}
\]
and
\[
\begin{aligned}
& J_2^m(s)=\sum_{k=1}^d\left|\left\langle\sigma_k \cdot \nabla \left(\xi^+_s - \omega^+_s\right), e_m\right\rangle\right|^2, \\
& J_3^m(s)=\frac{1}{2} \sum_{k=1}^d\left\langle \xi^+_s - \omega^+_s, e_{-m}\right\rangle\left\langle\sigma_k \cdot \nabla\left(\sigma_k \cdot \nabla \left( \xi^+_s - \omega^+_s \right)\right), e_m\right\rangle, \\
& J_4^m(s)=-\sum_{k=1}^d\left\langle \xi^+_s - \omega^+_s, e_{-m}\right\rangle\left\langle\sigma_k \cdot \nabla \left( \xi^+_s - \omega^+_s \right), e_m\right\rangle. \\
\end{aligned}
\]

Let us start with $J_1^m(s)$. We have:
\[
\begin{aligned}
& \sum_{m \in \mathbb{Z}^2} \frac{1}{\left(1+|m|^2\right)^{1-\varepsilon}} J_1^m(s) \\
& =\sum_{m \in \mathbb{Z}^2} \frac{1}{\left(1+|m|^2\right)^{1-\varepsilon}}\left\langle \xi^+_s - \omega^+_s, e_{-m}\right\rangle\left\langle \xi^+_s - \omega^+_s, \Delta e_m\right\rangle- \\
& \sum_{m \in \mathbb{Z}^2} \frac{1}{\left(1+|m|^2\right)^{1-\varepsilon}} \left\langle \xi^+_s - \omega^+_s, e_{-m}\right\rangle \left( \left\langle \left( F(K \star \xi^+_s) - F(K \star \omega^+_s) \right) \omega^+_s, \nabla e_m\right\rangle - \left\langle F\left( K \star \omega^+_s\right) \left( \xi^+_s - \omega^+_s\right), \nabla e_m\right\rangle \right) \\
& =-\sum_{m \in \mathbb{Z}^2} \frac{|m|^2}{\left(1+|m|^2\right)^{1-\varepsilon}}\left|\left\langle \xi^+_s - \omega^+_s, e_m\right\rangle\right|^2 \\
&- \sum_{m \in \mathbb{Z}^2} \frac{i m}{\left(1+|m|^2\right)^{1-\varepsilon}}\left\langle \xi^+_s - \omega^+_s, e_{-m}\right\rangle \left( \left\langle \left( F(K \star \omega^+_s) - F(K \star \xi^+_s) \right) \omega^+_s,  e_m\right\rangle - \left\langle F\right( K \star \xi^+_s \left) \left( \xi^+_s - \omega^+_s \right), e_m\right\rangle \right).
\end{aligned}
\]
Therefore one has:
\[
\begin{aligned}
& \sum_{m \in \mathbb{Z}^2} \frac{1}{\left(1+|m|^2\right)^{1-\varepsilon}} J_1^m(s) \leq  -\sum_{m \in \mathbb{Z}^2} \frac{|m|^2}{\left(1+|m|^2\right)^{1-\varepsilon}}\left|\left\langle \xi^+_s - \omega^+_s, e_m\right\rangle\right|^2 \\
&+ \sum_{m \in \mathbb{Z}^2} \frac{1}{2\left(1+|m|^2\right)^{1-\varepsilon}}\bigg(|m|^2\left|\left\langle \xi^+_s - \omega^+_s, e_{-m}\right\rangle\right|^2+  \left| \left\langle \left( F(K \star \xi^+_s ) - F(K \star \omega^+_s) \right) \omega^+_s, e_m\right\rangle \right|^2 \\
&+ \left| \left\langle F \left( K \star \omega^+_s\right) \left(\xi^+_s - \omega^+_s \right), e_m\right\rangle \right|^2 \bigg) \\
\leqslant & -\frac{1}{2}\left\|\xi^+_s - \omega^+_s\right\|_{\HH^{\epsilon}_2(\T^2)}^2+\frac{1}{2}\left\|\left( F(K \star \xi^+_s) - F(K \star \omega^+_s) \right) \omega^+_s\right\|_{\HH^{-1+\epsilon}_2(\T^2)}^2 +\frac{1}{2}\left\|F\left(K \star \xi^+_s \right) \left( \xi^+_s - \omega^+_s\right)\right\|_{\HH^{-1+\epsilon}_2(\T^2)}^2 \\
\leqslant & -\frac{1}{2}\left\|\xi^+_s - \omega^+_s\right\|_{\HH^{\epsilon}_2(\T^2)}^2+C\left(1+\left\| \omega^+_s\right\|_{B_{\infty,2}^{-1+\varepsilon}}^2\right) \|\xi^+_s - \omega^+_s\|_{\HH^{-1+\epsilon}_2(\T^2)}^2.
\end{aligned}
\]
For the last inequality we used Corollary 2 of \cite{10.1214/16-AOP1116}. For the penultimate inequality we used:
\[
-\frac{|m|^2}{\left(1+|m|^2\right)^{1-\varepsilon}} \leq -C\left( 1+|m|^2 \right)^{\varepsilon}, \quad \forall m \in \Z^2,
\]
and Lemma \ref{lem:K_negative_epsilon} in the Appendix.

Furthermore, we have:
$$
\begin{aligned}
& \sum_{m \in \mathbb{Z}^2} \frac{1}{\left(1+|m|^2\right)^{1-\varepsilon}}\left(J_2^m(s)+J_3^m(s)\right) \\
= & \sum_{k=1}^d\left(\left\|\sigma_k \cdot \nabla \left(\xi^+_s - \omega^+_s \right)\right\|_{\HH^{-1+\varepsilon}_2(\T^2)}^2+\left\langle\sigma_k \cdot \nabla\left(\sigma_k \cdot \nabla \left( \xi^+_s - \omega^+_s \right)\right), \xi^+_s - \omega^+_s\right\rangle_{\HH^{-1+\varepsilon}_2(\T^2)}\right) \\
\leqslant & C\left\|\xi^+_s - \omega^+_s\right\|_{\HH^{-1+\varepsilon}_2(\T^2)}^2
\end{aligned}
$$
where $C$ depends on $\left\{\sigma_k\right\}_{k \in \N}$. The last inequality follows from Lemma 2.3 in \cite{krylov2015hypoellipticity}.
By Cauchy-Schwarz inequality, we have
$$
\begin{aligned}
& \int_0^T\left(\sum_{m \in \mathbb{Z}^2} \frac{1}{\left(1+|m|^2\right)^{1-\varepsilon}} J_5^m(s)\right)^2 \mathrm{~d} s \\
= & \int_0^T\left(\sum_{k=1}^d \sum_{m \in \mathbb{Z}^2} \frac{1}{\left(1+|m|^2\right)^{1-\varepsilon}}\left\langle \xi^+_s - \omega^+_s, e_{-m}\right\rangle\left\langle\sigma_k \cdot \nabla \left( \xi^+_s - \omega^+_s \right), e_m\right\rangle\right)^2 \mathrm{~d} s \\
\lesssim & \int_0^T\left(\sum_{k=1}^d \sum_{m \in \mathbb{Z}^2} \frac{1}{\left(1+|m|^2\right)^{1-\varepsilon}}\left\langle \xi^+_s - \omega^+_s, e_{-m}\right\rangle^2\right)\left(\sum_{k=1}^d \sum_{m \in \mathbb{Z}^2} \frac{1}{\left(1+|m|^2\right)^{1-\varepsilon}}\left\langle\sigma_k \cdot \nabla \left( \xi^+_s - \omega^+_s \right), e_m\right\rangle^2\right) \mathrm{d} s \\
\lesssim & \int_0^T\left\|\xi^+_s - \omega^+_s\right\|_{\HH^{-1+\varepsilon}}^2\left\|\sigma_k \cdot \nabla \left( \xi^+_s - \omega^+_s\right)\right\|_{\HH^{-1+\varepsilon}_2(\T^2)}^2 \mathrm{~d} s \lesssim \int_0^T\left\|\xi^+_s - \omega^+_s\right\|_{\HH^{-1+\varepsilon}_2(\T^2)}^4\mathrm{~d} s,
\end{aligned}
$$
again by Lemma 2.3 in \cite{krylov2015hypoellipticity}, and we deduce that

\[
\E \left[ \int_0^T\left(\sum_{m \in \mathbb{Z}^2} \frac{1}{\left(1+|m|^2\right)^{1-\varepsilon}} J_5^m(s)\right)^2 \mathrm{~d} s  \right] \leq \E \left[ \int_0^T\left\|\xi^+_s - \omega^+_s\right\|_{\HH^{-1+\varepsilon}_2(\T^2)}^4\mathrm{~d} s \right].
\]
This allows us to conclude that:
\[
M_t:=\int_0^t \sum_{m \in \mathbb{Z}^2} \frac{1}{\left(1+|m|^2\right)^{2+\varepsilon}}J_5^m(s) \mathrm{d} W_s^k, 
\ \ t\ge 0,\]
is a continuous martingale.
Finally, by merging the previous inequalities, we have:
$$
\left\|\xi^+_t - \omega^+_t\right\|_{\HH^{-1+\varepsilon}_2(\T^2)}^2 \lesssim \int_0^t\left(1+\left\|\omega^+_s\right\|_{B_{\infty, 2}^{-1+\varepsilon}}^2\right)\left\|\xi^+_s - \omega^+_s\right\|_{\HH^{-1+\varepsilon}_2(\T^2)}^2 \mathrm{~d} s+M_t .
$$

By applying the stochastic Gronwall's inequality (Corollary 5.4 in \cite{Gei21}) we conclude:
\[
\mathbb{P}\left(\left\|\xi^+_t - \omega^+_t\right\|_{\HH^{-1+\varepsilon}_2(\T^2)}=0, \forall t \in[0, T]\right)=1.
\]

\end{proof}


\section{Proof of the main result}
\label{sec:convergence-probability}
To prove the Theorem \ref{th:main_theorem}, we make use of the following Gyongy-Krylov Lemma (see  \cite{gyongy} for a proof):

\begin{lemma}
\label{lemma:gyongy}
Let $\left\{X_n\right\}$ be a sequence of random elements in a Polish space $E$ equipped with the Borel $\sigma$-algebra. Then $X_n$ converges in probability to a $E$-valued random element if and only if for each pair $\left(X_{\ell}, X_m\right)$ of subsequences, there exists a subsequence $\left\{v_k\right\}$ given by
\[
v_k=\left(X_{\ell(k)}, X_{m(k)}\right)
\]
converging weakly to a random element $v(x, y)$ supported on the diagonal set
\[
\{(x, y) \in E \times E: x=y\}.
\]
\end{lemma}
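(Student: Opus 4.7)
The lemma asserts an equivalence between convergence in probability and a diagonal weak subsequential criterion, on a Polish space $(E,d)$. For the forward direction, if $X_n \to X$ in probability, then any two subsequences also converge to $X$ in probability; the pair converges in probability to $(X,X)$, hence weakly to the law of $(X,X)$, which is supported on the diagonal. No further extraction is needed, so this implication is routine.

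For the converse, the plan is to reduce the claim to showing that $\{X_n\}$ is Cauchy in probability on $(E,d)$; since $E$ is Polish and therefore admits a complete compatible metric, completeness will then allow me to extract an almost-sure limit along a subsequence and upgrade to convergence in probability of the full sequence. Assume for contradiction that $\{X_n\}$ is not Cauchy in probability. Then there exist $\epsilon, \delta > 0$ and index sequences $\ell(k), m(k) \to \infty$ with
\[
\mathbb{P}\bigl[d(X_{\ell(k)}, X_{m(k)}) > \epsilon\bigr] > \delta \quad \text{for all } k.
\]
Apply the hypothesis to this specific pair of subsequences to produce a further subsequence $k_j$ such that $v_j := (X_{\ell(k_j)}, X_{m(k_j)})$ converges weakly in $E \times E$ to some $v$ whose law is supported on the diagonal $D := \{(x, y) \in E \times E : x = y\}$. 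The set $\{(x, y) : d(x, y) \geq \epsilon\}$ is closed in $E \times E$ and disjoint from $D$, so the limit law assigns it mass zero. Portmanteau's theorem for closed sets then gives
\[
\limsup_{j \to \infty} \mathbb{P}\bigl[d(X_{\ell(k_j)}, X_{m(k_j)}) \geq \epsilon\bigr] \leq 0,
\]
contradicting the uniform lower bound $\delta > 0$. Hence $\{X_n\}$ is Cauchy in probability.

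To extract the limit, choose a subsequence $n_j$ along which $\mathbb{P}[d(X_{n_j}, X_{n_{j+1}}) > 2^{-j}] \leq 2^{-j}$, which is possible by the Cauchy property just established. Borel–Cantelli shows that almost surely $d(X_{n_j}, X_{n_{j+1}}) \leq 2^{-j}$ for all large $j$, so $(X_{n_j})_j$ is almost surely Cauchy in the complete metric $d$ and converges almost surely to some $E$-valued random element $X$. Combining this subsequential a.s. convergence with the Cauchy-in-probability property of the full sequence via the triangle inequality
\[
\mathbb{P}\bigl[d(X_n, X) > \epsilon\bigr] \leq \mathbb{P}\bigl[d(X_n, X_{n_j}) > \epsilon/2\bigr] + \mathbb{P}\bigl[d(X_{n_j}, X) > \epsilon/2\bigr]
\]
and sending $n, n_j \to \infty$ suitably yields $X_n \to X$ in probability, completing the proof. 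The main obstacle is conceptual rather than computational: one must resist invoking Skorokhod representation (which would require passing to a different probability space and would not directly deliver convergence in probability of the original $X_n$) and instead push the diagonal weak-limit hypothesis down to a quantitative statement about $d(X_{\ell(k)}, X_{m(k)})$ purely through Portmanteau.
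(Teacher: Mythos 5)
Your proof is correct. Note that the paper does not prove this lemma at all---it states it as the Gy\"ongy--Krylov lemma and cites \cite{gyongy} for the proof---and your argument is essentially the classical one from that reference: necessity via convergence in probability of the pair $(X_{\ell(k)},X_{m(k)})$ to $(X,X)$, and sufficiency by contradiction, applying the Portmanteau theorem to the closed set $\{(x,y): d(x,y)\geq\epsilon\}$ (which is disjoint from the diagonal) to conclude that $\{X_n\}$ is Cauchy in probability, then using completeness of a compatible metric and Borel--Cantelli to produce the limit and upgrade to convergence in probability of the full sequence.
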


\begin{proof}(of Theorem \ref{th:main_theorem}])
Let us consider a pair $v_{i, j} = (g^{i, +}, g^{i, -}, g^{j, +}, g^{j, -})_{i, j}$ with values in $\mathfrak{Y}^2 \times \mathfrak{Y}^2$. Because of Section \ref{sec:compactness}, we can conclude that there exists a subsequence, say $v_{i_k, j_k}$, such that $v_{i_k, j_k}$ converges weakly to a probability $\nu$ on $\mathfrak{Y}^2 \times \mathfrak{Y}^2$.

Now, because of Skorokhod representation theorem (see p.70 in \cite{skorohod}), modulo changing probability space, a random variable $(\tilde{u}^+, \tilde{u}^-, \overline{u}^+, \overline{u}^-)$ exists in the new probability space, and $(\overline{g}^{i_k, +}, \overline{g}^{i_k, -}, \overline{g}^{j_k, +}, \overline{g}^{j_k, -})_{i, j}$ such that:
\[
(\overline{g}^{i_k, +}, \overline{g}^{i_k, -}, \overline{g}^{j_k, +}, \overline{g}^{j_k, -})_{i, j} \xrightarrow[]{\text{weakly}} (\tilde{u}^+, \tilde{u}^-, \overline{u}^+, \overline{u}^-),
\]
with $\text{Law}\left(\overline{g}^{i_k, +}, \overline{g}^{i_k, -}, \overline{g}^{j_k, +}, \overline{g}^{j_k, -} \right) = \text{Law}\left( g^{i_k, +}, g^{i_k, -}, g^{j_k, +}, g^{j_k, -} \right)$. Therefore, by Section \ref{sec:passing-to-the-limit} one concludes that $(\tilde{u}^+, \tilde{u}^-)$ and $(\overline{u}^+, \overline{u}^-)$ are both solutions in the sense of Definition \ref{def:solution_of_system}. Now, because of Proposition \ref{prop:uniqueness}, one concludes $(\tilde{u}^+, \tilde{u}^-) = (\overline{u}^+, \overline{u}^-)$ a.s.

Now, we notice that this implies:
\[
\nu \left( \left\{(x, y) \in \mathfrak{Y}^2 \times \mathfrak{Y}^2: x=y \right\} \right) = 0,
\]
and therefore we can finally apply Lemma \ref{lemma:gyongy} to get that the original sequence in the original probability space converges in probability.
\end{proof}

\section{Technical estimates}
Let us define:
\[
A:\HH^2_p(\T^2)\subset \LL^p(\T^2) \xrightarrow[]{} \LL^p(\T^2),
\]
as:
\[
Af(x):= \nu \Delta f(x) + \sum\limits_k \left( \sigma^k (x)\right)^T \left( \nabla^2 f (x) \right)  \sigma^k(x).
\]

Notice that:
\[
\| (I-A)^{\frac{\alpha}{2}} f\|_{\LL^p(\T^2)} \simeq \| f\|_{\HH ^{\alpha}_p(\T^2)},
\]
and that we are able to express the equation satisfied by $g^{N, +}$ in mild form (and similarly for $g^{N, -}_t$). In other words, the following holds:
\begin{equation}
\label{eq:mild_form}
\begin{aligned}
g_t^{N, +} = & e^{tA}g_0^{N, +} + \int_0^t e^{(t-s)A} \left( \nabla V^N \star \left( F\left( K \star g_s^{N} \right) \mu_s^{N, +} \right) \right) ds + \\
&+ \frac{\sqrt{2\nu}}{N}\sum\limits_{i=1}^N \int_0^t e^{(t-s)A} \left( \nabla V^N(\cdot-X^{i,N, +}_t) \right) d B^{i, +}_s + \\
&+ \sum\limits_{k \in \mathbb{N}} \int_0^t e^{(t-s)A} \left( \nabla V^N \star \left( \sigma_k \mu_s^{N, +} \right) \right) dW^k_s + \\
&+ \sum\limits_{k \in \mathbb{N}} \int_0^t e^{(t-s)A} \left\langle \mu_s^{N, +}, \left(\nabla \sigma_k \cdot \sigma_k \right) \cdot \nabla V^N(x-\cdot) \right\rangle ds.
\end{aligned}
\end{equation}

\begin{lemma}
\label{lemma}
Let $q \geq 2$. Then there exists $C$, independent of $N$ and $t \in [0,T]$, such that:
\begin{equation}
    \label{eq:bound_stochastic_term_1}
    \E \left[ \bigg\|  \frac{1}{N}\sum\limits_{i=1}^N \int_0^t  (I-A)^{\frac{\alpha}{2}} e^{(t-s)A} \nabla (V^N(\cdot - X^{i,N, \pm}_s)) d B^{i, \pm}_s \bigg\|_{\LL^p}^q \right] \leq C
\end{equation}
and:
\begin{equation}
    \label{eq:bound_stochastic_term_2}
    \E \left[ \bigg\| \int_0^t (I-A)^{\frac{\alpha}{2}} e^{(t-s)A} \left( \nabla V^N * \left( \sigma_k \mu_s^{N, \pm} \right) \right) dW^k_s \bigg\|_{\LL^p}^q \right] \leq C.
\end{equation}
\end{lemma}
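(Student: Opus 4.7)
The plan is to bound both stochastic convolutions by combining the vector-valued Burkholder-Davis-Gundy (BDG) inequality in $\LL^p$ (which applies with dimension-free constants because $p \geq 2$ makes $\LL^p$ a $2$-smooth Banach space), the smoothing properties of the analytic semigroup $e^{tA}$, and the explicit scaling of the mollifier $V^N$.

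First I would record two standard ingredients. From the analyticity of $e^{tA}$ (established in the appendix) together with heat-kernel-type estimates on $\T^2$, for every $p' \in (1, p]$ and $\gamma \geq 0$,
\[
\big\|(I-A)^{\gamma/2} e^{tA}\big\|_{\LL^{p'} \to \LL^p} \lesssim t^{-\gamma/2 - (1/p' - 1/p)}, \qquad t > 0.
\]
The scaling $V^N(x) = N^{2\beta} V(N^\beta x)$ gives
\[
\|\nabla V^N\|_{\LL^{p'}} \lesssim N^{\beta(3 - 2/p')}.
\]

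For \eqref{eq:bound_stochastic_term_1}, I would exploit the independence of the Brownian motions $\{B^{i,\pm}\}_i$. Applying scalar BDG pointwise in $x$ followed by Minkowski's integral inequality to move the $\LL^p$-norm past the expectation (or, equivalently, a direct vector-valued BDG in $\LL^p$), and using the translation invariance of the $\LL^p$-norm, the $q$-th moment is controlled by
\[
\bigg(\frac{1}{N}\int_0^t \big\|(I-A)^{\alpha/2} e^{(t-s)A}\nabla V^N\big\|_{\LL^p}^2\, ds\bigg)^{q/2},
\]
the factor $1/N$ coming from the orthogonality of the $B^i$'s. Inserting the smoothing estimate at $p' \in (1,p)$ with $1/p'$ slightly smaller than $(1-\alpha)/2 + 1/p$ (so that the time integral $\int_0^t (t-s)^{-\alpha - 2(1/p' - 1/p)}\, ds$ converges), the final bound is of order $N^{(2\beta(3 - 2/p') - 1)\,q/2}$; letting this auxiliary parameter tend to its threshold, the bound is uniform in $N$ precisely when $2\beta(2 + \alpha - 2/p) \leq 1$, which is Assumption~\ref{ass:assprinci}(5).

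The main obstacle is \eqref{eq:bound_stochastic_term_2}, which lacks the $1/\sqrt{N}$ averaging gain available for \eqref{eq:bound_stochastic_term_1}. The remedy is to rewrite $\nabla V^N * (\sigma_k \mu_s^{N,\pm})$ by splitting off the dominant translation term: from $V^N*(\sigma_k\mu_s^{N,\pm})(y) = \sigma_k(y) g_s^{N,\pm}(y) + R_s^{N,k}(y)$, with remainder $R_s^{N,k}(y) := \int V^N(y-z)[\sigma_k(z)-\sigma_k(y)]\,d\mu_s^{N,\pm}(z)$ gaining an $N^{-\beta}$ from the Lipschitz regularity of $\sigma_k$, one obtains $\nabla V^N*(\sigma_k\mu_s^{N,\pm}) = \sigma_k \nabla g_s^{N,\pm} + (\nabla\sigma_k) g_s^{N,\pm} + \nabla R_s^{N,k}$. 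The leading term $\sigma_k \nabla g_s^{N,\pm}$ is then estimated by absorbing the derivative into the semigroup through the Riesz-type boundedness $\|\nabla(I-A)^{-1/2}\|_{\LL^p\to\LL^p} \lesssim 1$, reducing the bound to $\|\sigma_k g_s^{N,\pm}\|_{\LL^p} \lesssim \|g_s^{N,\pm}\|_{\LL^p}$. Summing in $k$ via $\sum_k(\|\sigma_k\|_\infty^2 + \|\nabla\sigma_k\|_\infty^2) < \infty$ and applying BDG, the resulting inequality is self-referential in $\|g_s^{N,\pm}\|_{\HH^\alpha_p}$, and is closed by a (stochastic) Gronwall argument on $[0, T]$ using the initial bound from Assumption~\ref{ass:assprinci}(3) and, once again, the scaling constraint of Assumption~\ref{ass:assprinci}(5) to absorb any residual $N$-dependence.
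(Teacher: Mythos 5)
Your treatment of \eqref{eq:bound_stochastic_term_1} is essentially correct and runs parallel to the paper's: both arguments use Burkholder--Davis--Gundy together with the independence of the $B^{i,\pm}$ to extract the factor $1/N$, and then estimate the semigroup acting on $\nabla V^N$ at the cost of a power of $N$ that is absorbed precisely by Assumption \ref{ass:assprinci}(5). You pay for the roughness of $\nabla V^N$ via an $\LL^{p'}\to\LL^p$ smoothing estimate, whereas the paper first passes to the Hilbert scale through the embedding $\HH^{1+\alpha-2/p}_2\hookrightarrow\HH^{\alpha}_p$ and then applies the moment inequality $\|(I-A)^{\gamma}\nabla V^N\|_{\LL^2}\lesssim\|\nabla V^N\|_{\LL^2}^{1-\gamma}\|(I-A)\nabla V^N\|_{\LL^2}^{\gamma}$ together with the explicit scaling of $V^N$; both routes land on the identical constraint $2\beta\left(2+\alpha-\tfrac{2}{p}\right)<1$, so this part is a legitimate variant.

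The argument for \eqref{eq:bound_stochastic_term_2} has a genuine gap. Your diagnosis is right --- the $W^k$ are common to all particles, so the $1/\sqrt{N}$ orthogonality gain is unavailable (the paper's own reduction, which rewrites the integrand as $\frac1N\sum_i(I-A)^{\alpha/2}e^{(t-s)A}\nabla_x\bigl(\sigma_k(X^{i,N}_s)V^N(x-X^{i,N}_s)\bigr)$ and declares the estimate ``similar'', silently reuses the independent-noise computation) --- but your substitute fails at the decisive step. After writing $\nabla V^N*(\sigma_k\mu^{N}_s)=\div(\sigma_k g^{N}_s)+(\text{remainder})$ and absorbing the derivative through $(I-A)^{\alpha/2}e^{(t-s)A}\div=(I-A)^{(1+\alpha)/2}e^{(t-s)A}(I-A)^{-1/2}\div$, the integrand of the BDG square function is bounded by $(t-s)^{-(1+\alpha)}\|g^{N}_s\|_{\LL^p}^2$, and $\int_0^t(t-s)^{-(1+\alpha)}\,ds=\infty$ for every $\alpha\ge 0$. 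Splitting the cost differently, e.g.\ paying only $(t-s)^{-1/2}$ and keeping $\|\sigma_k g^N_s\|_{\HH^{\alpha}_p}$ on the right, still yields $(t-s)^{-1}$ after squaring, which remains non-integrable; and paying $(t-s)^{-(1-\delta)/2}$ forces a norm $\|g^N_s\|_{\HH^{\alpha+\delta}_p}$ strictly stronger than the one being propagated, so the self-referential estimate does not close. No Gronwall argument, stochastic or otherwise, repairs a divergent time integral. This is the standard obstruction for stochastic convolutions with first-order (transport-type) noise: the drift term \eqref{eq:mild_term_1} tolerates the singularity $(t-s)^{-(1+\alpha)/2}$ because it enters to the first power, but the It\^o isometry squares it. To salvage your decomposition one would need a genuinely different input (stochastic maximal regularity for the pair consisting of $A$ and the operators $\sigma_k\cdot\nabla$, exploiting the parabolicity supplied by the Stratonovich corrector), or else one must keep $\nabla V^N$ intact as in the proof of \eqref{eq:bound_stochastic_term_1} and pay the power $N^{2\beta(2+\alpha-2/p)}$ --- which, absent the $1/N$ prefactor, is exactly the quantity whose boundedness still needs justification.
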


\begin{proof}
It is sufficient to prove \eqref{eq:bound_stochastic_term_1} because \eqref{eq:bound_stochastic_term_2} is similar. One has:
\[
\int_0^t (I-A)^{\alpha / 2} e^{(t-s)A} \left( \nabla V^N \star \left( \sigma_k \mu_s^{N, \pm} \right) \right) dW^k_s = \frac{1}{N} \sum_{i=1}^N \int_0^t(\mathrm{I}-A)^{\alpha / 2} e^{(t-s) A} \nabla_x \left( \sigma^k (X_s^{i, N, \pm}) V^N\left(x-X_s^{i, N, \pm}\right)\right) dW^k_s
\]

Furthermore, it is sufficient to prove it with $\mu_s^{N,+}$, and the corresponding result with $\mu_s^{N,-}$ is the same. For the sake of simplicity, we omit the sign.
From Sobolev embeddings we have
$$
\begin{aligned}
& \E \left[ \left\|\frac{1}{N} \sum_{i=1}^N \int_0^t(\mathrm{I}-A)^{\alpha / 2} e^{(t-s) A} \nabla_x \left(V^N\left(x-X_s^{i, N}\right)\right) d B_s^i\right\|_{\LL^p}^q \right] \\
& \quad \leqslant C \cdot \E \left[ \left\|\frac{1}{N} \sum_{i=1}^N \int_0^t(\mathrm{I}-A)^{\left(1+\alpha-\frac{2}{p}\right) / 2} e^{(t-s) A} \nabla_x \left(V^N\left(x-X_s^{i, N}\right)\right) d B_s^i\right\|_{\LL^2}^q \right].
\end{aligned}
$$

From the Burkholder-Davis-Gundy inequality (see \cite{bdg} for instance) we obtain
$$
\begin{aligned}
& \E \left[ \left\|\frac{1}{N} \sum_{i=1}^N \int_0^t(\mathrm{I}-A)^{\left(1+\alpha-\frac{2}{p}\right) / 2} e^{(t-s) A}\nabla_x \left(V^N\left(x-X_s^{i, N}\right)\right) d B_s^i\right\|_{\LL^2}^q \right] \\
& \quad \leq C_q \mathbb{E}\left[ \left( \frac{1}{N^2} \sum_{i=1}^N \int_0^t\left\|(\mathrm{I}-A)^{\left(1+\alpha-\frac{2}{p}\right) / 2} e^{(t-s) A} \nabla_x \left(V^N\left(x-X_s^{i, N}\right)\right)\right\|_{\mathbb{L}^2\left(\T^2\right)}^2 d s \right)^{q / 2} \right].
\end{aligned}
$$

Moreover, we can estimate
$$
\frac{1}{N^2} \int_{\mathbb{T}^2} \sum_{i=1}^N \int_0^t\left|\left((\mathrm{I}-A)^{\left(1+\alpha-\frac{2}{p}\right) / 2} e^{(t-s) A} \nabla_x \left(V^N\left(x-X_s^{i, N}\right)\right)\right)(x)\right|^2 d s d x
$$
Finally, we obtain that:
\begin{equation}
    \begin{aligned}
        \frac{1}{N^2}  \sum\limits_{i=1}^N \int_0^t \int_{\T^2}  \bigg| \bigg( (I-A)^{(1+\alpha-\frac{2}{p})/2} e^{(t-s)A} \nabla_x (V^N(x - X^{i,N}_s)) \bigg) (x) \bigg|^2 ds dx = \\
        = \frac{1}{N} \int_0^t \| (I-A)^{(1+\alpha-\frac{2}{p})/2} e^{(t-s)A} \nabla V^N\|_{\LL^2(\R^2)}^2 ds \\
        = \frac{1}{N} \int_0^t \| (I-A)^{\frac{1}{2}-\frac{\delta}{2}} e^{(t-s)A} (I-A)^{(1+\alpha-\frac{2}{p}-(1-\delta))/2}\nabla V^N\|_{\LL^2(\R^2)}^2 \leq \\
        \leq \frac{1}{N} \int_0^t \| (I-A)^{\frac{1}{2}-\frac{\delta}{2}} e^{(t-s)A} \|_{\LL^2 \to \LL^2}^2 \|(I-A)^{(\alpha-\frac{2}{p}+\delta)/2}\nabla V^N\|_{\LL^2(\R^2)}^2 ds \leq \\
        \leq \frac{1}{N} \int_0^t \frac{1}{(t-s)^{1-\delta}} \| (I-A)^{(\alpha-\frac{2}{p}+\delta)/2} \nabla V^N\|_{\LL^2}^2 ds
    \end{aligned}
\end{equation}
Let us now study the term $\| (I-A)^{(\alpha-\frac{2}{p}+\delta)/2} \nabla V^N\|_{\LL^2}^2$. By Theorem 6.10 (p. 73) of \cite{pazy} we have that by defining $\gamma= \frac{\alpha}{2}-\frac{1}{p}+\frac{\delta}{2}$, we get
\begin{equation}
    \begin{aligned}
        \| (I-A)^{(\alpha-\frac{2}{p}+\delta)/2} \nabla V^N\|_{\LL^2}^2 & = \| (I-A)^{\gamma} \nabla V^N\|_{\LL^2}^2 \\
        & \underset{\text{Theorem }6.10 \text{ of \cite{pazy} }}{\leq} (C \| \nabla V^N\|_{\LL^2}^{1-\gamma} \cdot \|(I-A) \nabla V^N\|_{\LL^2}^\gamma)^2
        \end{aligned}
        \end{equation}
Now let us study separately the two terms:
\begin{equation}
\label{eq:mollifier_estimate_1}
    \left( \left\| \nabla V^N \right\|_{\LL^2}^{1-\gamma} \right) ^2
\end{equation}
and
\begin{equation}
\label{eq:mollifier_estimate_2}
    \left( \left\|(I-A) \nabla V^N \right\|_{\LL^2}^\gamma \right)^2
\end{equation}
For the first term \eqref{eq:mollifier_estimate_1}:
\[
\begin{aligned}
 & \left( \left\| \nabla V^N \right\|_{\LL^2}^{1-\gamma} \right)^2 = \left( \int_{\T^2} \left| \nabla V^N(x) \right|^2 dx \right)^{1-\gamma} \\
 & = N^{4\beta(1-\gamma)}\left( \int_{\T^2} \left| N^{\beta} \nabla V(N^{\beta}x) \right|^2 dx \right)^{1-\gamma}
 \end{aligned}
\]
and, with a change of variables, we have that:
\[
\left( \int_{\T^2} \left| N^{\beta} \nabla V(N^{\beta}x) \right|^2 dx \right)^{1-\gamma} = \left( \int_{\T^2} \left| \nabla V(y) \right|^2 dy \right)^{1-\gamma} < \infty,
\]
that does not depend on $N$.
Let us study now \eqref{eq:mollifier_estimate_2}.
We have:
\[
\begin{aligned}
\left( \left\| \left( I - A \right)\nabla V^N \right\|_{\LL^2}^{\gamma} \right)^2 & = \left( \int_{\T^2} \left| \left( I - A \right)\nabla V^N(x) \right|^2 dx \right)^{\gamma} \\
 & = N^{4\beta \gamma}\left( \int_{\T^2} \left| N^{3\beta} \left( I - A \right) \nabla V(N^{\beta}x) \right|^2 dx \right)^{\gamma} \\
 & =  N^{4\beta \gamma} N^{4\beta \gamma}\left( \int_{\T^2} \left| N^{\beta} \left( I - A \right) \nabla V(N^{\beta}x) \right|^2 dx \right)^{\gamma} \\
 & = N^{8\beta \gamma}\left( \int_{\T^2} \left| N^{\beta} \left( I - A \right) \nabla V(N^{\beta}x) \right|^2 dx \right)^{\gamma}
\end{aligned}
\]
Therefore:
\[
\| (I-A)^{(\alpha-\frac{2}{p}+\delta)/2} \nabla V^N\|_{\LL^2}^2 \lesssim N^{4 \beta (1-\gamma)} \cdot N^{8 \beta \gamma}=N^{4\beta (1 + \gamma)} = N^{4 \beta (\frac{\alpha}{2} - \frac{1}{p} + 1 + \frac{\delta}{2})}.
\]
Now from Assumption \ref{ass:assprinci} we have:
\begin{itemize}
    \item $\frac{2}{p} < \alpha < 1$ that can be rewritten as $0 < \frac{\alpha}{2}- \frac{1}{p}< 1- \frac{1}{p}$,
    \item $0 < \beta < \frac{1}{4+2\alpha-\frac 4 p} < \frac 1 4$ that can be rewritten as $4 \beta (\frac{\alpha}{2} - \frac{1}{p} + 1) < 1$.
\end{itemize}
It is obvious that, if $4 \beta (\frac{\alpha}{2} - \frac{1}{p} + 1) < 1$, then there exists a $\delta > 0$ such that also $4 \beta (\frac{\alpha}{2} - \frac{1}{p} + 1 + \frac{\delta}{2}) < 1$ holds.
Thus, with this choice of $\delta$, dividing by $N$ we get:
$$
\frac{1}{N^2} \int_{\mathbb{T}^2} \sum_{i=1}^N \int_0^t\left|\left((\mathrm{I}-A)^{\left(1+\alpha-\frac{2}{p}\right) / 2} e^{(t-s) A} \nabla_x \left(V^N\left(x-X_s^{i, N}\right)\right)\right)(x)\right|^2 d s d x \xrightarrow[N \to \infty]{} 0.
$$
\end{proof}

From now on, to simplify notation, denote $\tilde{A} := \left( I - A \right)$

\begin{prop}
    \label{prop1}
    Let $q \geq 2$. Then there exists a positive constant $C$ which does not depend on $N$ nor $t \in (0,T]$ such that:
    \[
    \E \bigg[ \bigg\| \tilde{A}^{\frac{\alpha}{2}} g_t^{N, \pm} \bigg\|_{\LL^p(\T^2)}^q \bigg] \leq C.
    \]
\end{prop}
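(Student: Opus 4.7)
The plan is to start from the mild formulation \eqref{eq:mild_form} for $g^{N,+}_t$, apply $\tilde{A}^{\alpha/2}$, take the $\LL^p$-norm and the $q$-th moment, and bound each of the five terms separately, then close the estimate via a singular Gronwall argument. The same argument will apply to $g^{N,-}_t$.

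For the initial datum term $e^{tA} g_0^{N,+}$, the analyticity of the semigroup generated by $A$ yields uniform boundedness of $e^{tA}$ on $\HH^\alpha_p(\T^2)$ for $t \in [0,T]$; combined with Assumption \ref{ass:assprinci}(3), which provides $\sup_N \E[\|V^N * \mu_0^{N,+}\|^q_{\HH^\alpha_p}] < \infty$, this gives a constant upper bound. The two stochastic terms—the one coming from the independent Brownians $B^{i,+}$ and the one coming from the common transport noises $W^k$—are handled directly by the preceding Lemma, i.e.\ by estimates \eqref{eq:bound_stochastic_term_1} and \eqref{eq:bound_stochastic_term_2}.

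For the nonlinear drift term, the key point is that $F$ is bounded, so setting $b_s(x) := F\bigl(K*g_s^{N,+} - K*g_s^{N,-}\bigr)(x)$ we have $\|b_s\|_\infty \leq M\sqrt{2}$ uniformly. Rewriting $\nabla V^N * (b_s \mu_s^{N,+}) = \nabla\bigl[V^N * (b_s \mu_s^{N,+})\bigr]$, and using that $V^N$ is a non-negative approximate identity, one gets the pointwise bound $\bigl|V^N*(b_s \mu_s^{N,+})(x)\bigr| \leq M\sqrt{2}\, g_s^{N,+}(x)$. Applying the standard analytic-semigroup smoothing estimate
\begin{equation*}
\bigl\|\tilde{A}^{\alpha/2} e^{(t-s)A} \nabla f\bigr\|_{\LL^p} \leq C (t-s)^{-(\alpha+1)/2} \|f\|_{\LL^p},
\end{equation*}
which follows from factoring $\tilde{A}^{\alpha/2} e^{(t-s)A} \nabla = \tilde{A}^{(\alpha+1)/2} e^{(t-s)A} \cdot \tilde{A}^{-1/2}\nabla$ together with $\LL^p$-boundedness of $\tilde{A}^{-1/2}\nabla$, gives a drift contribution controlled by $C \int_0^t (t-s)^{-(\alpha+1)/2} \|g_s^{N,+}\|_{\LL^p}\, ds$. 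Since $\alpha < 1$ by Assumption \ref{ass:assprinci}, we have $(\alpha+1)/2 < 1$ and the kernel is integrable. The last Itô-correction term involving $(\nabla \sigma_k \cdot \sigma_k)$ is handled analogously, where summability over $k$ uses $\sum_k \|\nabla \sigma_k\|_\infty \|\sigma_k\|_\infty < \infty$, which follows from our standing assumptions on $(\sigma_k)$ via Cauchy–Schwarz.

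Putting the bounds together, raising to the $q$-th power, applying Jensen's inequality with respect to the normalized singular measure $(t-s)^{-(\alpha+1)/2}\, ds / \int_0^t (\cdot)\, ds$, and using the trivial embedding $\|g_s^{N,+}\|_{\LL^p} \leq \|\tilde{A}^{\alpha/2} g_s^{N,+}\|_{\LL^p}$, the function $u_N(t) := \E\bigl[\|\tilde{A}^{\alpha/2} g_t^{N,+}\|_{\LL^p}^q\bigr]$ satisfies
\begin{equation*}
u_N(t) \leq C_1 + C_2 \int_0^t (t-s)^{-(\alpha+1)/2} u_N(s)\, ds,
\end{equation*}
with $C_1, C_2$ independent of $N$ and $t \in [0,T]$. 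A singular Gronwall lemma (iterating the inequality until the iterated kernel becomes regular) then yields $\sup_{N \in \mathbb{N}, t \in [0,T]} u_N(t) \leq C$, which is exactly the claim. The main obstacle is the drift estimate: the smoothing must absorb the full derivative $\nabla$ and still produce $\tilde{A}^{\alpha/2}$, leaving a kernel that is integrable in time, which is precisely why the bound $\alpha < 1$ in Assumption \ref{ass:assprinci} is essential.
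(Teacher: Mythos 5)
Your proposal is correct and follows essentially the same route as the paper: start from the mild formulation, delegate the two stochastic terms to the preceding Lemma, control the drift and the It\^{o}-correction terms via the boundedness of $F$ (respectively of $\sigma_k$, $\nabla\sigma_k$) together with the smoothing estimate obtained by factoring $\tilde{A}^{\alpha/2}e^{(t-s)A}\nabla = \tilde{A}^{(1+\alpha)/2}e^{(t-s)A}\,\tilde{A}^{-1/2}\nabla$, and close with a singular Gronwall argument. The only cosmetic difference is that you run Gronwall on the $q$-th moment after a Jensen step, whereas the paper works with the $L^q(\Xi;\LL^p)$ norm $\E[\cdot]^{1/q}$ throughout; these are equivalent.
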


\begin{proof}
In the following, for the sake of simplicity, when the sign is omitted we mean it to be $+$. We will work with $g^{N, +}$ because the proof for $g^{N, -}$ is the same. Let us consider the mild form \eqref{eq:mild_form} for the process $g^N$ and let us apply $(I-A)^{\frac{\alpha}{2}}$ to both sides of the equality. The goal is to prove the following estimate:
\[
\E \left[ \left\| \tilde{A}^{\frac{\alpha}{2}} g_t^N \right\|_{\LL^p}^q \right] \leq C + \int_0^t \frac{C_{\nu, T}}{(t-s)^{(1+\alpha) / 2}} \E \left[ \left\| \tilde{A}^{\frac{\alpha}{2}} g_s^N \right\|_{\LL^p}^q \right] ds
\]
Now, we need to estimate:
\begin{equation}
\label{eq:mild_term_1}
\int_0^t \tilde{A}^{\frac{\alpha}{2}}e^{(t-s)A} \left( \nabla V^N \star \left( F\left(K \star g_s^{N} \right) \mu_s^N \right) \right) ds,
\end{equation}
and
\begin{equation}
\label{eq:mild_term_2}
    \begin{aligned}
        \sum\limits_{k \in \mathbb{N}} \int_0^t \frac{1}{N}\sum\limits_{i=1}^N \tilde{A}^{\frac{\alpha}{2}} e^{(t-s)A} \left( \sigma_k(X^{i,N}_s) \cdot \nabla \sigma_k(X^{i,N}_s) \right) \cdot \nabla_x V^N(x-X^{i,N}_s) ds,
    \end{aligned}
\end{equation}
as the other terms are straightforward to bound (see e.g. \cite{paper}).

Let us start estimating \eqref{eq:mild_term_2}:
    \begin{equation}
    \begin{aligned}
    & \sum\limits_{k \in \mathbb{N}} \int_0^t \frac{1}{N}\sum\limits_{i=1}^N \tilde{A}^{\frac{\alpha}{2}} e^{(t-s)A} \left( \sigma_k(X^{i,N}_s) \cdot \nabla \sigma_k(X^{i,N}_s) \right) \cdot \nabla_x V^N(x-X^{i,N}_s) ds \\
    &=  \sum\limits_{k \in \mathbb{N}} \int_0^t \frac{1}{N}\sum\limits_{i=1}^N \tilde{A}^{\frac{\alpha}{2}} e^{(t-s)A} \nabla_x \left( \sigma_k(X^{i,N}_s) \cdot \nabla \sigma_k(X^{i,N}_s) V^N(x-X^{i,N}_s) \right) ds \\
    &=  \sum\limits_{k \in \mathbb{N}} \int_0^t \frac{1}{N}\sum\limits_{i=1}^N \tilde{A}^{\frac{\alpha}{2}+\frac{1}{2}} e^{(t-s)A} \tilde{A}^{-\frac{1}{2}}\nabla_x \left( \sigma_k(X^{i,N}_s) \cdot \nabla \sigma_k(X^{i,N}_s) V^N(x-X^{i,N}_s) \right) ds \\
    &=  \sum\limits_{k \in \mathbb{N}} \int_0^t \tilde{A}^{\frac{\alpha}{2}+\frac{1}{2}} e^{(t-s)A} \tilde{A}^{-\frac{1}{2}} \nabla_x \frac{1}{N}\sum\limits_{i=1}^N \left( \sigma_k(X^{i,N}_s) \cdot \nabla \sigma_k(X^{i,N}_s) V^N(x-X^{i,N}_s) \right) ds
    \end{aligned}
    \end{equation}

We already estimated the other two in \ref{lemma}.
Now, we notice that, defining $H:= \LL^p(\T^2)$:
\begin{equation}
\begin{aligned}
& \int_0^t\left\|\tilde{A}^{\frac{\alpha}{2}+\frac{1}{2}} e^{(t-s)A} \tilde{A}^{-\frac{1}{2}}\nabla_x \cdot \frac{1}{N}\sum\limits_{i=1}^N \left( \sigma_k(X^{i,N}_s) \cdot \nabla \sigma_k(X^{i,N}_s) \right) \cdot V^N(x-X^{i,N}_s)\right\|_{\mathbb{L}^q(\Xi, H)} d s \\
&= \int_0^t \mathbb{E} \left[  \left\| \tilde{A}^{\frac{\alpha}{2}+\frac{1}{2}} e^{(t-s)A} \tilde{A}^{-\frac{1}{2}}\nabla_x \cdot\frac{1}{N}\sum\limits_{i=1}^N \left( \sigma_k(X^{i,N}_s) \cdot \nabla \sigma_k(X^{i,N}_s) \right) \cdot V^N(x-X^{i,N}_s)\right\|_{\LL^p}^q \right]^{1/q} d s \\
&\leq \int_0^t  \mathbb{E} \left[ \left\|\tilde{A}^{(1+\alpha) / 2} e^{(t-s) A}\right\|_{\mathbb{L}^p \rightarrow \mathbb{L}^p}^q \left\| \frac{1}{N}\sum\limits_{i=1}^N \left( \sigma_k(X^{i,N}_s) \cdot \nabla \sigma_k(X^{i,N}_s) \right) \cdot V^N(x-X^{i,N}_s)\right\|_{\LL^p}^q \right]^{1/q} d s \\
&\leq \int_0^t \left\|\tilde{A}^{(1+\alpha) / 2} e^{(t-s) A}\right\|_{\mathbb{L}^p \rightarrow \mathbb{L}^p} \mathbb{E} \left[ \left\| \frac{1}{N}\sum\limits_{i=1}^N \left( \sigma_k(X^{i,N}_s) \cdot \nabla \sigma_k(X^{i,N}_s) \right) \cdot V^N(x-X^{i,N}_s)\right\|_{\LL^p}^q \right]^{1/q} d s \\
&\leq \left\| \sigma_k \right\|_{\LL^{\infty}} \left\| \nabla \sigma_k \right\|_{\LL^{\infty}} \int_0^t \left\|\tilde{A}^{(1+\alpha) / 2} e^{(t-s) A}\right\|_{\mathbb{L}^p \rightarrow \mathbb{L}^p} \mathbb{E} \left[ \left\| \frac{1}{N}\sum\limits_{i=1}^N V^N(x-X^{i,N}_s)\right\|_{\LL^p}^{q} \right]^{1/q} d s \\
&= C_k \int_0^t \left\|\tilde{A}^{(1+\alpha) / 2} e^{(t-s) A}\right\|_{\mathbb{L}^p \rightarrow \mathbb{L}^p} \E \left[ \left\| g_s^N \right\|_{\LL^p}^q \right]^{1/q} d s \leq \\
& \leq C_k \int_0^t \frac{C_{\nu, T}}{(t-s)^{(1+\alpha) / 2}} \E \left[ \left\| g_s^N \right\|_{\LL^p}^q \right]^{1/q} d s.
\end{aligned}
\end{equation}
For the penultimate identity, we used that 
\begin{equation}
\begin{aligned}
    &\left\| \frac{1}{N}\sum\limits_{i=1}^N \left( \sigma_k(X^{i,N}_s) \cdot \nabla \sigma_k(X^{i,N}_s) \right) \cdot V^N(x-X^{i,N}_s)\right\|_{\LL^p}^p \\
    &= \int_{\T^2} \left| \frac{1}{N}\sum\limits_{i=1}^N \left( \sigma_k(X^{i,N}_s) \cdot \nabla \sigma_k(X^{i,N}_s) \right) \cdot V^N(x-X^{i,N}_s)\right|^p dx \\
    &= \int_{\T^2} \left| \frac{1}{N}\sum\limits_{i=1}^N \left( \sigma_k(X^{i,N}_s) \cdot \nabla \sigma_k(X^{i,N}_s) \right) \cdot V^N(x-X^{i,N}_s)\right|^p dx \\
    &\leq \int_{\T^2} \left( \frac{1}{N}\sum\limits_{i=1}^N \left| \left( \sigma_k(X^{i,N}_s) \cdot \nabla \sigma_k(X^{i,N}_s) \right) \cdot V^N(x-X^{i,N}_s) \right| \right)^p dx \\
    &\leq \int_{\T^2} \left( \frac{1}{N}\sum\limits_{i=1}^N \left| \left( \sigma_k(X^{i,N}_s) \cdot \nabla \sigma_k(X^{i,N}_s) \right) \cdot V^N(x-X^{i,N}_s) \right| \right)^p dx \\
    &\leq \left\| \sigma_k \right\|_{\LL^{\infty}}^p \left\| \nabla \sigma_k \right\|_{\LL^{\infty}}^p \int_{\T^2} \left( \frac{1}{N}\sum\limits_{i=1}^N \left| V^N(x-X^{i,N}_s) \right| \right)^p dx \\
    &=\left\| \sigma_k \right\|_{\LL^{\infty}}^p \left\| \nabla \sigma_k \right\|_{\LL^{\infty}}^p \int_{\T^2} \left( \frac{1}{N}\sum\limits_{i=1}^N V^N(x-X^{i,N}_s) \right)^p dx \\
    &= \left\| \sigma_k \right\|_{\LL^{\infty}}^p \left\| \nabla \sigma_k \right\|_{\LL^{\infty}}^p \int_{\T^2} \left( \frac{1}{N}\sum\limits_{i=1}^N V^N(x-X^{i,N}_s) \right)^p dx \\
    &= \left\| \sigma_k \right\|_{\LL^{\infty}}^p \left\| \nabla \sigma_k \right\|_{\LL^{\infty}}^p \left\| V^N \star \mu^N_s \right\|_{\LL^p(\T^2)}^p. \\
\end{aligned}
\end{equation}
Therefore:
\[
\left\| \frac{1}{N}\sum\limits_{i=1}^N \left( \sigma_k(X^{i,N}_s) \cdot \nabla \sigma_k(X^{i,N}_s) \right) \cdot V^N(x-X^{i,N}_s)\right\|_{\LL^p} \leq \left\| \sigma_k \right\|_{\LL^{\infty}} \left\| \nabla \sigma_k \right\|_{\LL^{\infty}} \left\| V^N \star \mu^N_s \right\|_{\LL^p(\T^2)}
\]
and thus:
\[
\left\| \frac{1}{N}\sum\limits_{i=1}^N \left( \sigma_k(X^{i,N}_s) \cdot \nabla \sigma_k(X^{i,N}_s) \right) \cdot V^N(x-X^{i,N}_s)\right\|_{\LL^p}^q \leq \left\| \sigma_k \right\|_{\LL^{\infty}}^q \left\| \nabla \sigma_k \right\|_{\LL^{\infty}}^q \left\| V^N \star \mu^N_s \right\|_{\LL^p(\T^2)}^q,
\]
and for the last inequality, we used the following classical estimate from semigroup theory (see \cite{pazy}):
$$
\left\|\tilde{A}^{(1+\alpha) / 2} e^{(t-s) A}\right\|_{\mathbb{L}^p \rightarrow \mathbb{L}^p} \leq \frac{C_{\nu, T}}{(t-s)^{(1+\alpha) / 2}}.
$$

Finally, we have that:
\begin{equation}
\begin{aligned}
    \left( \E \left[ \left\| V^N*\mu_s^N \right\|_{\LL^p}^q \right] \right)^{\frac{1}{q}} =\left( \E \left[ \left\| g_s^N \right\|_{\LL^p}^q \right] \right)^{\frac{1}{q}}\leq \left( \E \left[ \left\| \tilde{A}^{\alpha/2} g_s^N \right\|_{\LL^p}^q \right] \right)^{\frac{1}{q}},
\end{aligned}
\end{equation}
from which:
\[
\eqref{eq:mild_term_2} \leq C_k \int_0^t \frac{C_{\nu, T}}{(t-s)^{(1+\alpha) / 2}} \E \left[ \left\| g_s^N \right\|_{\LL^p}^q \right]^{1/q} d s.
\]

Let us now estimate \eqref{eq:mild_term_1}.
\begin{equation}
\begin{aligned}
& \int_0^t  \left( \E \left[ \left \| \tilde{A}^{\frac{\alpha}{2}}e^{(t-s)A} \left( \nabla V^N \star \left( F \left( K \star g_s^{N} \right) \mu_s^N \right) \right) \right\|_{\LL^p}^q \right] \right)^{\frac{1}{q}}ds = \\
& \int_0^t  \left( \E \left[ \left\| \tilde{A}^{\frac{\alpha}{2}+\frac 1 2}e^{(t-s)A} \tilde{A}^{-\frac{1}{2}} \nabla \left( V^N \star \left( F \left( K \star g_s^{N} \right) \mu_s^N \right) \right) \right\|_{\LL^p}^q \right] \right)^{\frac{1}{q}} ds = \\
& \int_0^t \E \left[ \left\| \tilde{A}^{\frac{\alpha}{2}+\frac 1 2}e^{(t-s)A} \tilde{A}^{-\frac{1}{2}} \nabla \left( V^N \star \left( F \left( K \star g_s^{N} \right) \mu_s^N \right) \right) \right\|_{\mathbb{L}^p(\T^2)}^q \right]^{\frac 1 q} ds \leq \\
& \leq \int_0^t \frac{C_{\nu, T}}{(t-s)^{(1+\alpha) / 2}} \E \left[ \left\|\tilde{A}^{-\frac{1}{2}} \nabla \left( V^N \star \left( F \left( K \star g_s^{N}\right) \mu_s^N \right) \right) \right\|_{\mathbb{L}^p(\T^2)}^q \right]^{\frac 1 q} ds \leq \\
& \leq \int_0^t \frac{C_{\nu, T}}{(t-s)^{(1+\alpha) / 2}} \E \left[ \left\| V^N \star \left( F \left( K \star g_s^{N} \right) \mu_s^N \right) \right\|_{\mathbb{L}^p(\T^2)}^q \right]^{\frac 1 q} ds.
\end{aligned}
\end{equation}
On the other hand:
\[
\left|\left(V^N *\left(F\left(K * g_s^{N, +} - K * g_s^{N, -}\right) \mu_s^N\right)\right)(x)\right| \leq\left\|F\left(K * g_s^{N, +} - K * g_s^{N, -}\right)\right\|_{\infty}\left|V^N * \mu_s^N(x)\right| \leq M\left|g_s^N(x)\right|,
\]
therefore:
\[
\begin{aligned}
\int_0^t  \left( \E \left[ \left\| \tilde{A}^{\frac{\alpha}{2}}e^{(t-s)A} \left( \nabla V^N * \left( F \left( K * g_s^{N, +} - K * g_s^{N, -} \right) \mu_s^N \right) \right) \right\|_{\LL^p} \right] \right)^{\frac{1}{q}} ds & \lesssim \int_0^t \frac{C_{\nu, T}}{(t-s)^{(1+\alpha) / 2}} \E \left[ \left\| g_s^N \right\|_{\mathbb{L}^p(\T^2)}^q \right]^{\frac 1 q} \\
& \leq \int_0^t \frac{C_{\nu, T}}{(t-s)^{(1+\alpha) / 2}} \E \left[ \left\| \tilde{A}^{\frac{\alpha}{2}}g_s^N \right\|_{\mathbb{L}^p(\T^2)}^q \right]^{\frac 1 q},
\end{aligned}
\]
where $\lesssim$ and $\simeq$ mean that the inequalities and the equalities hold up to a constant.
Therefore, using the above, and the Lemma \ref{lemma} we get:
\[
\E \left[ \left\| \tilde{A}^{\frac{\alpha}{2}} g_t^N \right\|_{\LL^p}^q \right]^{1/q} \leq C + \int_0^t \frac{C_{\nu, T}}{(t-s)^{(1+\alpha) / 2}} \E \left[ \left\| \tilde{A}^{\frac{\alpha}{2}} g_s^N \right\|_{\LL^p}^q \right]^{1/q}ds
\]
and we conclude the proof by means of Gronwall's lemma.
\end{proof}

\begin{prop}
    \label{prop2}
    Let $\gamma \in (0,\frac{1}{2})$ and $q' \geq 2$. Then there exists a positive constant $C$ which does not depend on $N$ such that:
    \[
    \E \bigg[ \int_0^T \int_0^T \frac{ \|  g_t^{N, \pm} - g_s^{N, \pm} \|_{-2,2}^{q'}}{   |t-s|^{1+q'\gamma} } ds dt \bigg] \leq C.
    \]
\end{prop}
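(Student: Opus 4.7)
The plan is to obtain a Kolmogorov-type pointwise time increment estimate of the form
\[
\E\bigl[\|g_t^{N,\pm}-g_s^{N,\pm}\|_{\HH^{-2}_2}^{q'}\bigr]\leq C\,|t-s|^{q'/2},
\]
uniformly in $N$ and $s,t\in[0,T]$, and then integrate against $|t-s|^{-1-q'\gamma}$, noting that for $\gamma<1/2$ one has $\int_0^T\!\int_0^T |t-s|^{q'/2-1-q'\gamma}\,ds\,dt<\infty$. Once the pointwise estimate holds, Fubini gives the statement of the proposition immediately.

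To get the pointwise estimate I would start from \eqref{eq:g}, expressing $g_t^{N,+}-g_s^{N,+}$ as a sum of six terms: a nonlinear drift $\int_s^t\langle\mu_r^{N,+},F(K*g_r^{N,+}-K*g_r^{N,-})\bullet\nabla V^N(x-\cdot)\rangle\,dr$, the $N$-Brownian martingale $\tfrac{\sqrt{2\nu}}{N}\sum_i\int_s^t\nabla V^N(x-X^{i,N,+}_r)\,dB^{i,+}_r$, the Laplacian drift $\nu\int_s^t\Delta g_r^{N,+}\,dr$, the Hessian drift $\int_s^t\sum_k(\sigma^k)^T Hg_r^{N,+}\sigma^k\,dr$, the transport-noise martingale $\sum_k\int_s^t\langle\mu_r^{N,+},\nabla V^N(x-\cdot)\sigma_k\rangle\,dW_r^k$, and the It\^o-to-Stratonovich correction. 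Each piece is treated separately in $\HH^{-2}_2$.

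For the Lebesgue-integral pieces I would apply H\"older in time to get
\[
\Bigl\|\int_s^t F_i(r)\,dr\Bigr\|_{\HH^{-2}_2}^{q'}\leq |t-s|^{q'-1}\int_s^t \|F_i(r)\|_{\HH^{-2}_2}^{q'}\,dr,
\]
and then use the two-derivative gain of the $\HH^{-2}_2$ norm: $\|\Delta g_r^{N,+}\|_{\HH^{-2}_2}\lesssim \|g_r^{N,+}\|_{\LL^2}$, $\|(\sigma^k)^T Hg_r^{N,+}\sigma^k\|_{\HH^{-2}_2}\lesssim \|\sigma^k\|_\infty^2\|g_r^{N,+}\|_{\LL^2}$, while the transport-type drift can be written as $\nabla V^N*(F(K*g^+-K*g^-)\mu_r^{N,+})$ and bounded in $\HH^{-1}_2$ by $M\|V^N*\mu_r^{N,+}\|_{\LL^2}=M\|g_r^{N,+}\|_{\LL^2}$; the Stratonovich correction term is handled analogously. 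All the $\LL^2$ (in fact $\HH^\alpha_p$) norms of $g_r^{N,+}$ are controlled in $q'$-th moment by Proposition \ref{prop1}, so the Lebesgue pieces produce a $|t-s|^{q'}$ factor, which is even better than the $|t-s|^{q'/2}$ we need.

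For the two stochastic integrals I would use the Burkholder--Davis--Gundy inequality in the Hilbert space $\HH^{-2}_2$, followed by H\"older in time, giving
\[
\E\Bigl\|\int_s^t G_i(r)\,dM_r\Bigr\|_{\HH^{-2}_2}^{q'}\lesssim |t-s|^{q'/2-1}\int_s^t\E\|G_i(r)\|_{\HH^{-2}_2}^{q'}\,dr.
\]
For the transport-noise term $G_k(r)=\nabla V^N*(\sigma_k\mu_r^{N,+})$, the argument is the same as for the transport drift above. The genuinely delicate piece is the Brownian martingale $\tfrac{\sqrt{2\nu}}{N}\sum_i\int_s^t\nabla V^N(x-X_r^{i,N,+})\,dB_r^{i,+}$: by independence of the $B^{i,+}$'s its quadratic variation is $\tfrac{2\nu}{N^2}\sum_i\int_s^t\|\nabla V^N(\cdot-X_r^{i,N,+})\|_{\HH^{-2}_2}^2\,dr=\tfrac{2\nu(t-s)}{N}\|\nabla V^N\|_{\HH^{-2}_2}^2$, and here one uses that on $\T^2$ one has $\|\nabla V^N\|_{\HH^{-2}_2}\simeq\|V^N\|_{\HH^{-1}_2}\leq\bigl(\sum_{k\ne 0}(1+|k|^2)^{-1}|\hat V(k/N^\beta)|^2\bigr)^{1/2}$; since $\hat V$ is Schwartz the Fourier tail decay gives at worst a logarithmic $N$-dependence, which is more than compensated by the $1/N$ factor. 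This is the main technical obstacle and the only point where one genuinely uses the scaling $V^N(x)=N^{2\beta}V(N^\beta x)$ together with $\beta<1/4$ from Assumption \ref{ass:assprinci}.

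Summing the six contributions yields the desired $|t-s|^{q'/2}$ increment bound with a constant independent of $N$, and the proposition then follows from the elementary integrability of $|t-s|^{q'/2-1-q'\gamma}$ on $[0,T]^2$ when $\gamma<1/2$.
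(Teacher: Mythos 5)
Your proposal is correct and follows essentially the same route as the paper: decompose the increment $g_t^{N,\pm}-g_s^{N,\pm}$ via \eqref{eq:g}, apply H\"older in time to the Lebesgue terms using the two-derivative gain of the $\HH^{-2}_2$ norm, apply Burkholder--Davis--Gundy to the two martingale terms, and track the $N$-dependence through the scaling of $V^N$, finishing with the integrability of $|t-s|^{q'(1/2-\gamma)-1}$. The only difference is at the Brownian martingale, where the paper bounds $\frac{1}{N}\|V^N\|_{\HH^{-1}_2}^2$ crudely by $\frac{1}{N}\|V^N\|_{\LL^2}^2\lesssim N^{2\beta-1}$ (using $\beta<1/2$) rather than your sharper Fourier-tail estimate; both suffice.
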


\begin{proof}
Again, we omit to specify the sign of $g^N$ and we will work with $g^{N, +}$. The proof for $g^{N, -}$ is identical. Let $q^{\prime} \geqslant 2$. We use the fact that $\mathbb{L}^2\left(\mathbb{T}^2\right) \subset \mathbb{H}_2^{-2}(\T^2)$ with continuous embedding, and that the linear operator $\Delta$ is bounded from $\mathbb{L}^2\left(\mathbb{T}^2\right)$ to $\mathbb{H}_2^{-2}(\T^2)$. See e.g. \cite{belyaev2022multipliers} for these results.

Let us first recall that, from interpolation, by the previous proposition and using the fact that $\left\|g_t^N\right\|_{\mathbb{L}^1\left(\mathbb{T}^2\right)}=1$, we have, for any $\theta \in(0,1)$:
\begin{equation}
\label{eq:interp_g}
\mathbb{E}\left[\left\|g_t^N\right\|_{0,2}^{q^{\prime}}\right] \leqslant \mathbb{E}\left[\left\|g_t^N\right\|_{0, p}^{\theta q^{\prime}}\left\|g_t^N\right\|_{\mathbb{L}^1\left(\T^2\right)}^{(1-\theta) q^{\prime}}\right] \leqslant \mathbb{E}\left[\left\|g_t^N\right\|_{0, p}^{\theta q^{\prime}}\right] \leqslant C_{q^{\prime}, T}
\end{equation}

We then observe that:
\begin{equation}
\begin{aligned}
g_t^N(x) - g_s^N(x) &=  \int_s^t \left\langle \mu_r^N, F \left(K * g^{N, +}_r - K * g^{N, -}_r \right) \cdot \nabla V^N(x-\cdot) \right\rangle dr \\
&+ \frac{\sqrt{2\nu}}{N}\sum\limits_{i=1}^N \int_s^t \nabla V^N(x-X^{i,N}_r) d B^i_r + \int_s^t A g_r^N(x) dr \\
&+ \sum\limits_{k \in \mathbb{N}} \frac{1}{N} \sum\limits_{i=1}^N\int_s^t \sigma_k(X^{i,N}_r) \cdot \nabla V^N(x-X^{i,N}_r) dW^k_r \\
&+ \sum\limits_{k \in \mathbb{N}} \int_s^t \left\langle \mu_r^N, \left( \sigma_k \cdot \nabla \sigma_k \right) \cdot \nabla V^N(x-\cdot) \right\rangle dr
\end{aligned}
\end{equation}

Therefore we have
\[
\begin{aligned}
\mathbb{E}\left[\left\|g_t^N(x)-g_s^N(x)\right\|_{-2,2}^{q^{\prime}}\right] & \leq(t-s)^{q^{\prime}-1} \int_s^t \mathbb{E}\left[\left\|\left\langle \mu_r^N, F \left(K * g^{N, +}_r - K * g^{N, -}_r \right) \nabla V^N(x-\cdot)\right\rangle\right\|_{-2,2}^{q^{\prime}}\right] d r \\
& +(t-s)^{q^{\prime}-1} \frac{1}{2} \int_s^t \mathbb{E}\left[\left\| A g_r^N(x)\right\|_{-2,2}^{q^{\prime}}\right] d r \\
&+ \mathbb{E}\left[\left\|\frac{\sqrt{2 \nu}}{N} \sum_{i=1}^N \int_s^t \nabla\left(V^N\right)\left(x-X_r^{i, N}\right) d B_r^i\right\|_{-2,2}^{q^{\prime}}\right] \\
&+ \sum\limits_{k \in \mathbb{N}} \mathbb{E} \left[ \left\| \frac{1}{N} \sum\limits_{i=1}^N\int_s^t \sigma_k(X^{i,N}_r) \cdot \nabla V^N(x-X^{i,N}_r) dW^k_r \right\|_{-2,2}^{q'} \right] \\
&+ \sum\limits_{k \in \mathbb{N}}\mathbb{E} \left[ \left\| \frac{1}{N}\sum\limits_{i=1}^N \int_s^t \left( \sigma_k(X^{i,N}_r) \cdot \nabla \sigma_k(X^{i,N}_r) \right) \cdot \nabla V^N(x-X^{i,N}_r) dr\right\|_{-2,2}^{q'}\right] 
\end{aligned}
\]

To estimate the first term we observe first that
$$
\begin{aligned}
& \mathbb{E}\left[\left\|\left\langle \mu_r^N, F \left(K * g^{N, +}_r - K * g^{N, -}_r \right) \nabla V^N(x-\cdot)\right\rangle\right\|_{-2,2}^{q^{\prime}}\right] =\mathbb{E}\left[\left\| \nabla \left( \left( \mu_r^N F \left(K * g^{N, +}_r - K * g^{N, -}_r \right) \right) * V^N\right)\right\|_{-2,2}^{q^{\prime}}\right] \\
& \leq \mathbb{E} \left[ \left\| \left(\mu_r^N F \left(K * g^{N, +}_r - K * g^{N, -}_r \right)\right) * V^N \right\|_{-1,2}^{q^{\prime}}\right] \\
& \leq C_M \mathbb{E}\left[\left\|g_t^N\right\|_{\mathbb{L}^2\left(\mathbb{T}^2\right)}^{q^{\prime}}\right] \leq C .
\end{aligned}
$$
For the second term, we write
$$
\mathbb{E}\left[\left\| A g_r^N\right\|_{-2,2}^{q^{\prime}}\right] \leq C \mathbb{E}\left[\left\|g_r^N\right\|_{\mathbb{L}^2\left(\mathbb{T}^2\right)}^{q^{\prime}}\right] \leq C_{q^{\prime}, T}
$$
using \eqref{eq:interp_g}. We bound the stochastic terms in the following way:
\begin{equation}
\begin{aligned}
\mathbb{E}\left[ \bigg\| \frac{1}{N} \sum_{i=1}^N \int_s^t \nabla V^N (x-\right. & \left.\left.X_r^{i, N}\right) d B_r^i \bigg\|_{-2,2}^{q^{\prime}} \right] \\
& \leq C_{q^{\prime}} \mathbb{E}\left[\frac{1}{N^2} \sum_{i=1}^N \int_s^t\left\|\nabla\left(V^N\right)\left(x-X_r^{i, N}\right)\right\|_{-2,2}^2 d r\right]^{q^{\prime} / 2},
\end{aligned}
\end{equation}
and:
\begin{equation}
\begin{aligned}
& \mathbb{E}\left[ \left\| \frac{1}{N} \sum_{i=1}^N \int_s^t \sigma_k(X_r^{i,N}) \cdot \nabla V^N(x-X_r^{i, N}) d W_r^i \right\|_{-2,2}^{q^{\prime}} \right] \\
& \leq C_{q^{\prime}} \left\| \sigma_k \right\|_{\LL^{\infty}} \left\| \nabla \sigma_k \right\|_{\LL^{\infty}} \mathbb{E}\left[\frac{1}{N^2} \sum_{i=1}^N \int_s^t\left\|\nabla\left(V^N\right)\left(x-X_r^{i, N}\right)\right\|_{-2,2}^2 d r\right]^{q^{\prime} / 2}
\end{aligned}
\end{equation}
and we observe that
\begin{equation}
\label{eq:estimate_for_VN}
\begin{aligned}
\frac{1}{N^2} \int_{\mathbb{R}} & \sum_{i=1}^N \int_s^t\left\|(\mathrm{I}-A)^{-1} \nabla\left(V^N\right)\left(x-X_r^{i, N}\right)\right\|_{\LL^2}^2 d r d x \\
& =(t-s) \frac{1}{N}\left\|V^N\right\|_{-1,2}^2 \leq(t-s) \frac{1}{N}\left\|V^N\right\|_{0,2}^2 \leq C N^{2 \beta-1}(t-s) \leq C(t-s) .
\end{aligned}
\end{equation}
Now, it remains to estimate:
\begin{equation}
\begin{aligned}
    & \mathbb{E} \left[ \left\| \frac{1}{N}\sum\limits_{i=1}^N \int_s^t \left( \sigma_k(X^{i,N}_r) \cdot \nabla \sigma_k(X^{i,N}_r) \right) \cdot \nabla V^N(x-X^{i,N}_r) dr\right\|_{-2,2}^{q'}\right] \\
    & = \mathbb{E} \left[ \left( \left\| \frac{1}{N}\sum\limits_{i=1}^N \int_s^t \left( \sigma_k(X^{i,N}_r) \cdot \nabla \sigma_k(X^{i,N}_r) \right) \cdot \nabla V^N(x-X^{i,N}_r) dr\right\|_{-2,2} \right)^{q'}\right] \\
    & = \mathbb{E} \left[ \left( \left\| \frac{1}{N}\sum\limits_{i=1}^N \int_s^t \left( \sigma_k(X^{i,N}_r) \cdot \nabla \sigma_k(X^{i,N}_r) \right) \cdot \nabla V^N(x-X^{i,N}_r) dr\right\|_{-2,2}^2 \right)^{q'/2}\right] \\
    & \overset{\text{Jensen}}{\lesssim} \mathbb{E} \left[ \left( \frac{1}{N}\sum\limits_{i=1}^N \int_s^t \left\| \left( \sigma_k(X^{i,N}_r) \cdot \nabla \sigma_k(X^{i,N}_r) \right) \cdot \nabla V^N(x-X^{i,N}_r)\right\|_{-2,2}^2 dr \right)^{q'/2}\right] \\
    & \leq \left\| \sigma_k \right\|_{\LL^{\infty}}^{q'} \left\| \nabla \sigma_k \right\|_{\LL^{\infty}}^{q'} \mathbb{E} \left[ \left( \frac{1}{N}\sum\limits_{i=1}^N \int_s^t \left\| \nabla V^N(x-X^{i,N}_r)\right\|_{-2,2}^2 dr \right)^{q'/2}\right] \\
\end{aligned}
\end{equation}
We control the last term in the same way we did in \eqref{eq:estimate_for_VN}.
To conclude the lemma, we need to divide all the terms by $|t-s|^{1+q^{\prime} \gamma}$. From the previous estimates, we always get a term of the form $|t-s|^{\varepsilon}$ with $\varepsilon := q' \gamma <1$ (using the assumption $\gamma<\frac{1}{2}$ and $q' \geq 2$).
\end{proof}

\newpage

\section{Appendix}

\subsection{Properties of the Biot-Savart kernel}
If \(\omega\) is a scalar field defined on $\T^2$, then the function \(u = K * \omega\) satisfies the equation \(\curl(u) = \omega\), where
\begin{equation}
\label{eq:biot-savart_kernel}
K(x) := \frac{i}{2 \pi} \sum_{k \in \Z^2: k \neq 0} \exp\left(i k \cdot x\right) \frac{k^{\perp}}{k^2}, \quad x \in \T^2.
\end{equation}
Here, the convolution is considered in relation to the group structure of \(\T^2\). It can be shown that \(K \in \LL^1(\T^2)\). In fact, we have \(K = \nabla^{\perp} G\), where
\begin{equation}
\label{eq:green_function}
G(x) := \frac{1}{2 \pi} \sum_{k \in \Z^2; k \neq 0} \frac{\exp\{i k \cdot x\}}{|k|^2}.
\end{equation}
Thus, we can state the following proposition (see the appendix for a proof):
\begin{prop}
The function $G$ defined in \ref{eq:green_function} is in $C^{\infty}\left(\mathbb{T}^2 \backslash\{0\}\right)$. Moreover, its behaviour in 0 is given by
$$
|G(x)| \leq C(-\log |x|+1) \quad x \in \T^2
$$
and that of its derivative $D^{(n)}, n$ positive integer, by
$$
\left|D^n G(x)\right| \leq C_n\left(|x|^{-n}+1\right) .
$$
Therefore we also have the following:
\[
\begin{aligned}
\left|K(x)\right| \leq C(|x|^{-1}+1).
\end{aligned}
\]
where $K$ is the Biot-Savart kernel defined in $\eqref{eq:biot-savart_kernel}$
\end{prop}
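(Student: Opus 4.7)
The plan is to exploit the well-known Euclidean fundamental solution $G_{\R^2}(x)=-\frac{1}{2\pi}\log|x|$ of $-\Delta$ on $\R^2$ and to show that the torus Green's function $G$ differs from $G_{\R^2}$ (up to a multiplicative constant) by a function which is smooth in a neighbourhood of the origin. Once this is established, both pointwise estimates and the control on the Biot--Savart kernel follow from the explicit behaviour of the logarithm and its derivatives.

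First, I would verify that the series defining $G$ converges in $\mathscr{S}'(\T^2)$: since the Fourier coefficients $1/|k|^2$ are bounded and testing against $\phi\in\mathscr{S}(\T^2)$ produces an absolutely convergent sum, $G$ is a well-defined periodic distribution. Its distributional Laplacian may therefore be computed term-by-term, giving
\[
-\Delta G \;=\; \frac{1}{2\pi}\sum_{k\in\Z^2\setminus\{0\}} e^{ik\cdot x}.
\]
By the Poisson summation formula (equivalently, the Fourier expansion of the Dirac comb), the right-hand side equals $c_1\delta_0 - c_2$ on $\T^2$ for explicit positive constants $c_1,c_2$ depending only on the chosen normalisation of the torus. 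In particular, on $\T^2\setminus\{0\}$ the distribution $-\Delta G$ coincides with the constant $-c_2$, so elliptic regularity (interior regularity for the Laplacian) yields $G\in C^\infty(\T^2\setminus\{0\})$.

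The central step is to set $h(x):=G(x)+\frac{c_1}{2\pi}\log|x|$ on a small disk $B_r(0)\subset\T^2$ and to show that $h\in C^\infty(B_r(0))$. Indeed, since $-\Delta\bigl(-\frac{c_1}{2\pi}\log|x|\bigr)=c_1\delta_0$ distributionally on $B_r(0)$, we obtain $-\Delta h=-c_2$ on $B_r(0)$, a smooth datum, and elliptic regularity gives $h\in C^\infty(B_r(0))$. From the identity $G(x)=-\frac{c_1}{2\pi}\log|x|+h(x)$ and the boundedness of $h$ and of all its derivatives on, say, $B_{r/2}(0)$, one reads off
\[
|G(x)|\leq \tfrac{c_1}{2\pi}|\log|x||+\|h\|_{L^\infty(B_{r/2}(0))}\leq C(-\log|x|+1),
\]
\[
|D^n G(x)|\leq C_n|x|^{-n}+\|D^n h\|_{L^\infty(B_{r/2}(0))}\leq C_n(|x|^{-n}+1),
\]
in $B_{r/2}(0)\setminus\{0\}$. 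Away from the origin $G$ and all its derivatives are continuous on the compact set $\T^2\setminus B_{r/2}(0)$, hence uniformly bounded, so the estimates hold on all of $\T^2\setminus\{0\}$ after enlarging $C$ and $C_n$. The bound on the Biot--Savart kernel is then immediate: since $K=\nabla^\perp G$, we have $|K(x)|\leq |\nabla G(x)|\leq C(|x|^{-1}+1)$.

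The main technical obstacle is the Poisson-summation step, namely identifying the singular part of $-\Delta G$ at the origin with the correct Dirac mass; this requires care with the normalisation of $\T^2$ (whether $\R^2/\Z^2$ or $\R^2/(2\pi\Z)^2$) and with the constant $c_1$ that appears in front of the logarithm in $h$. Since only qualitative bounds are claimed, the exact values of $c_1$ and $c_2$ can be absorbed into $C$ and $C_n$; nevertheless, the Poisson-summation identification is the one non-elementary input, and everything else reduces to elliptic regularity for $-\Delta$ with smooth right-hand side.
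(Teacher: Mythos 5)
Your proof is correct, but it follows a genuinely different route from the paper's. The paper represents $G$ through the heat semigroup, writing $G(x)=-\int_0^{\infty}v(t,x)\,\mathrm{d}t$ with $v_t\rightharpoonup\delta_0-1$, splits the time integral at $t=1$, and uses the Gaussian periodization of the heat kernel to isolate the singular piece $G_4(x)=\int_0^1\frac{1}{4\pi t}e^{-|x|^2/(4t)}\,\mathrm{d}t$, whose logarithmic blow-up and derivative bounds are then obtained by explicit changes of variables; smoothness of the remaining pieces is checked by differentiating under the integral sign. You instead compute $-\Delta G$ term by term, identify it via Poisson summation as $c_1\delta_0-c_2$, and invoke hypoellipticity of the Laplacian twice: once to get $G\in C^{\infty}(\T^2\setminus\{0\})$, and once to show that $h:=G+\frac{c_1}{2\pi}\log|x|$ is smooth near the origin. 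Your argument is shorter and yields the structurally stronger statement that $G$ is a constant multiple of $\log|x|$ plus a smooth function near $0$ (the paper only records the one-sided bounds, although its $G_3+G_4$ splitting implicitly carries the same information), at the price of citing interior elliptic regularity and the Dirac-comb identity as black boxes, whereas the paper's computation is entirely explicit and self-contained. Two minor points: the constants $c_1,c_2$ do depend on whether $\T^2$ is $[-\pi,\pi]^2$ or $[0,1]^2$, as you acknowledge, and the bound $|G(x)|\leq C(-\log|x|+1)$ is only meaningful for $|x|$ small, since $-\log|x|+1$ changes sign on $[-\pi,\pi]^2$ --- a defect of the statement shared by both proofs rather than a gap in yours.
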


\begin{proof}
It is easy to see that the Fourier expansion of $G$ is
$$
G(x)=-\frac{1}{4 \pi^2} \sum_{k \in \mathbb{Z}^2, k \neq 0} \frac{1}{|k|^2} e^{2 \pi i k \cdot x}
$$

Since this expression is not helpful in the analysis of regularity around 0, we will use the solution $v$, in $\LL^2\left([0, T] \times \mathbb{T}^2\right)$, of the heat equation
$$
\partial_t v=\Delta v
$$

with initial condition $v_0=\delta_0-1$ (more precisely, $v_t \rightharpoonup \delta_0-1$ as $t \rightarrow 0$ ). It is easy to see that this unique solution can be expressed in two ways: one with its Fourier expansion, which is
\begin{equation}
\label{eq:heat_equation_1}
    v(t, x)=\sum_{k \in \mathbb{Z}^2, k \neq 0} e^{-4 \pi^2|k|^2 t} e^{2 \pi i k \cdot x},
\end{equation}

To show $v_t \rightharpoonup \delta_0-1$ as $t \rightarrow 0$ it is sufficient to show that:
\[
\left\langle v(t, \cdot), e^{2\pi i k \cdot x} \right\rangle = \begin{cases}
    e^{-4 \pi^2|k|^2 t} \text{ if } k \neq 0, \\
    0 \text{ if } k = 0.
\end{cases} \xrightarrow[t \to 0^+]{} \begin{rcases*}
-1 \text{ if } k \neq 0, \\
    0 \text{ if } k = 0.
\end{rcases*} = \left\langle \delta_0 - 1,  e^{2\pi i k \cdot x} \right\rangle, 
\]
for any $k \in \Z^2$.
The other one is with Gaussian densities, that is:
\begin{equation}
\label{eq:heat_equation_2}
v(t, x)=-1+\frac{1}{4 \pi t} \sum_{l \in \mathbb{Z}^2} \exp \frac{-|x-l|^2}{4 t} .
\end{equation}
To obtain this, we use the following argument. If one works with $\R^d$ instead of $\T^d$, we have that the heat kernel is given as:
\[
\left(\rho_t f\right)(x)=\int_{\mathbb{R}^d} \frac{1}{(4 \pi t)^{d / 2}} e^{-|x-y|^2 /(4 t)} f(y) d y, \quad f \in \mathbb{L}^p\left(\mathbb{R}^d\right),
\]
and therefore the solution to the heat equation will be given by $\rho_t v_0$. To pass to the case of $\T^2$, one can consider the periodic version of the heat kernel:
\[
\left(\tilde{\rho}_t f\right)(x)=\sum\limits_{l \in \Z^2} \left(\rho_t f\right) (x + 2 \pi l), \quad f \in \mathbb{L}^p\left(\mathbb{T}^d\right).
\]
The definition above makes sense because the sum is locally finite (meaning that, locally, the non-zero terms are finite).

One verifies, e.g. using \eqref{eq:heat_equation_1}, that
$$
\begin{aligned}
G(x) & =-\int_0^{+\infty} v(t, x) \mathrm{d} t=-\int_1^{+\infty} =:-G_1(x)-G_2(x) .
\end{aligned}
$$

Now $G_1$ is in $C^{\infty}\left(\mathbb{T}^2\right)$, as one can see from its Fourier expansion, again from \eqref{eq:heat_equation_1}. For $G_2$ we exploit \eqref{eq:heat_equation_2}.

$$
\begin{aligned}
G_2(x)= & \left(-1+\int_0^1 \frac{1}{4 \pi t} \sum_{l \in \mathbb{Z}^2, l \neq 0} \exp \frac{-|x-l|^2}{4 t} \mathrm{~d} t\right) \\
& +\int_0^1 \frac{1}{4 \pi t} \exp \frac{-|x|^2}{4 t}=: G_3(x)+G_4(x),
\end{aligned}
$$
the sum being between functions on $\mathbb{R}^2$ (though $x$ is still in $[-1 / 2,1 / 2]^2$ ). The first term $G_3$ is $C^{\infty}$ on an open neighborhood of $[-1 / 2,1 / 2]^2$ (e.g. $(-3 / 4,3/4) $: indeed, for any $n$ nonnegative integer, we have
$$
\begin{aligned}
& \int_0^1\left|D^{(n)} \frac{1}{4 \pi t} \sum_{l \in \mathbb{Z}^2, l \neq 0} \exp \frac{-|x-l|^2}{4 t}\right| \mathrm{d} t \\
& \lesssim \int_0^1 t^{-(2 n+1)} \sum_{l \neq 0} \exp \frac{-|x-l|^2}{4 t} \mathrm{~d} t \\
& \lesssim \int_0^1 t^{-(2 n+1)} \sum_{h=1}^{\infty} \exp \frac{-h}{c t} \mathrm{~d} t \\
& \sim \int_0^1 t^{-(2 n+1)} e^{-1 /(c t)} \mathrm{d} t<+\infty,
\end{aligned}
$$
for some $c>0$ independent of $x$, when $x$ is in $(-3 / 4,3/4)$. The second term $G_4$ is in $C^{\infty}((-3 / 4,3/4)\backslash \{0\})$. So $G$ is in $C^{\infty}(\mathbb{T}^2 \backslash \{0\})$. For the behaviour at 0, this is given by that of $G_4$ at $0$, which is analysed by standard techniques: we have, with the change of variable $s=|x|^{-1 / 2} t$,
$$
G_4(x) \sim \int_0^{|x|^{-1 / 2}} s^{-1} e^{-1 /(4 s)} \mathrm{d} s \sim-\log |x|
$$
and, for $n \geq 1$,
$$
\left|D^{(n)} G_4(x)\right| \sim|x|^{-n} \int_0^{|x|^{-1 / 2}} s^{-(2 n+1)} e^{-1 /(4 s)} \mathrm{d} s \sim|x|^{-n} .
$$
\end{proof}

More in general we have the following regularizing properties. These follow from one of Green's functions that can be found in \cite{grubb2016regularity}, section 3.

\begin{lemma}
The linear operator $K[\omega]=-\nabla^{\perp}\left(-\Delta \right)^{-1} \omega$, defined first for $\omega \in$ $C^{\infty}(\T^2)$, extends to continuous linear maps (still denoted by $K$ )
$$
\begin{aligned}
& K: \LL^2(\T^2) \rightarrow \LL^q\left(\T^2 ; \mathbb{R}^2\right), q \in[1, \infty), \\
& K: \LL^p(\T^2) \rightarrow C\left(\T^2 ; \mathbb{R}^2\right), p \in(2, \infty), \\
& K: \HH^{\alpha, p}(\T^2) \rightarrow \HH^{\alpha+1, p}\left(\T^2 ; \mathbb{R}^2\right), \quad p \in(1, \infty), \alpha \geq 0 .
\end{aligned}
$$
Moreover, for all the above extensions $K[\omega]$ is divergence-free (in the sense of distributions).
\end{lemma}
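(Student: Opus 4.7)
The plan is to treat $K = \nabla^\perp(-\Delta)^{-1}$ as a Fourier multiplier on the torus with symbol $m(k) = i k^\perp / |k|^2$ for $k \neq 0$ (and $0$ for $k=0$), then combine the natural one-derivative gain with classical Sobolev embeddings on $\T^2$. Since everything here is stated modulo the zero-mean constraint in the definition of $\HH^s_p(\T^2)$, the inverse Laplacian $(-\Delta)^{-1}$ is well-defined and continuous from mean-zero distributions to mean-zero distributions, so there is no obstruction at the zero frequency.

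The central ingredient is the Sobolev-lifting statement, namely
\[
K : \HH^{\alpha}_p(\T^2) \to \HH^{\alpha+1}_p(\T^2;\R^2), \qquad p \in (1,\infty),\ \alpha \geq 0.
\]
To prove this, I would check that for any $s \in \R$ the operator $(I-\Delta)^{(s+1)/2}\,K\,(I-\Delta)^{-s/2}$ is a Fourier multiplier with symbol
\[
\widetilde{m}(k) \;=\; (1+|k|^2)^{(s+1)/2}\,\frac{ik^\perp}{|k|^2}\,(1+|k|^2)^{-s/2} \;=\; \frac{ik^\perp(1+|k|^2)^{1/2}}{|k|^2},
\]
which is smooth away from $k=0$ and satisfies the Mikhlin--Hörmander estimates $|\partial^\gamma \widetilde{m}(k)| \lesssim |k|^{-|\gamma|}$ for $|k| \geq 1$. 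Hence by the Mikhlin multiplier theorem for the torus (applied on the mean-zero subspace to avoid the $k=0$ singularity), $\widetilde{m}$ defines a bounded operator on $L^p(\T^2)$ for every $p \in (1,\infty)$, which is exactly the claim. Taking $\alpha = s$ gives the third line of the lemma.

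The first two lines then follow by composing with standard Sobolev embeddings on the two-dimensional torus. For $K : L^2 \to L^q$ with $q \in [1,\infty)$: by the multiplier result, $K$ maps $L^2 = \HH^0_2$ into $\HH^1_2$, and the Sobolev embedding $\HH^1_2(\T^2) \hookrightarrow L^q(\T^2)$ holds for all $q < \infty$ in dimension two. For $K : L^p \to C$ with $p > 2$: $K$ maps $L^p = \HH^0_p$ into $\HH^1_p$, and since $p > 2$ we have $1 - 2/p > 0$, so the Sobolev embedding $\HH^1_p(\T^2) \hookrightarrow C^{0,1-2/p}(\T^2) \hookrightarrow C(\T^2;\R^2)$ applies. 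Finally, the divergence-free property is immediate from the symbol calculation $ik\cdot (ik^\perp/|k|^2) = -k\cdot k^\perp/|k|^2 = 0$ for all $k \neq 0$, which means $\div K[\omega]$ has all Fourier coefficients equal to zero as a distribution, and this passes to each of the extensions by continuity.

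The main (and essentially only) technical obstacle is the rigorous application of the Mikhlin theorem on $\T^2$ with the zero-mode excluded; this is standard but requires the observation that the multiplier need only be controlled for $|k| \geq 1$, and that on mean-zero distributions $(-\Delta)^{-1}$ and $(I-\Delta)^{-1}$ induce equivalent norms at each regularity level. Once this is in place, the three mapping properties follow cleanly from the Mikhlin result and the two-dimensional Sobolev embeddings, and the divergence-free statement is purely algebraic in Fourier variables.
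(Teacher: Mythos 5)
Your proof is correct, but it takes a different route from the paper: the paper does not actually prove this lemma in-house, it simply invokes the regularity theory of Green's functions from the cited reference (Grubb, Section 3), having earlier established the pointwise kernel bounds $|G(x)|\lesssim -\log|x|+1$ and $|K(x)|\lesssim |x|^{-1}+1$ via the heat-kernel representation of $G$; from those kernel estimates the first two mapping properties follow by Young's and H\"older's inequalities (since $K\in \LL^{r}(\T^2)$ for all $r<2$), while the Sobolev-lifting property is the part delegated entirely to the citation. Your argument instead treats $K$ as the Fourier multiplier $ik^{\perp}/|k|^2$ on the mean-zero subspace, derives the lifting $\HH^{\alpha}_p\to\HH^{\alpha+1}_p$ from the Mikhlin--H\"ormander theorem applied to the conjugated symbol $(1+|k|^2)^{1/2}ik^{\perp}/|k|^2$, and then obtains the first two lines from the two-dimensional Sobolev embeddings $\HH^1_2\hookrightarrow \LL^q$ ($q<\infty$) and $\HH^1_p\hookrightarrow C^{0,1-2/p}$ ($p>2$); the divergence-free claim is the algebraic identity $k\cdot k^{\perp}=0$. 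This is a legitimate and fully self-contained alternative, consistent with the paper's own Fourier-series proof of the companion extension $K:\HH^{-\alpha}_2\to\HH^{-\alpha+1}_2$ (which is exactly your argument in the easy case $p=2$). What your approach buys is independence from Green's function theory at the cost of invoking a periodic multiplier theorem; what the paper's approach buys is that the first two mapping properties come for free from kernel bounds already proved, with only the $\LL^p$-lifting outsourced. The one point you should make fully precise if you write this up is the transference step: the symbol is defined only on $\Z^2\setminus\{0\}$, so one should either extend it to a smooth Mikhlin symbol on $\R^2\setminus\{0\}$ and transfer, or cut off near the origin using the mean-zero restriction, as you indicate.
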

\begin{lemma}
We have that:
    \begin{equation}
    \left\| u_t\right\|_{\LL^{\infty}(\T^2)} = \left\| K \star \omega_t \right\|_{\LL^{\infty}(\T^2)} \leq \left\| K \right\|_{\LL^{1}(\T^2)} \left\| \omega_t\right\|_{\LL^{\infty}(\T^2)}
    \end{equation}
    for all $t \in [0,T]$.
\end{lemma}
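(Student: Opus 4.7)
The plan is to observe that this is a direct application of Young's convolution inequality, so essentially no new machinery is required beyond what has already been established in the excerpt. Recall that from the preceding proposition, we have $K \in \LL^p(\T^2)$ for all $p < 2$, and in particular $K \in \LL^1(\T^2)$, since $|K(x)| \leq C(|x|^{-1} + 1)$ is integrable on the torus in dimension two.

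First I would write out the convolution explicitly. For a fixed $t \in [0,T]$ and any $x \in \T^2$,
\[
u(t,x) = (K*\omega(t,\cdot))(x) = \int_{\T^2} K(x-y)\,\omega(t,y)\,dy,
\]
which makes sense because $K(x-\cdot) \in \LL^1(\T^2)$ and $\omega(t,\cdot) \in \LL^\infty(\T^2)$. Then, by pulling the absolute value inside the integral and bounding $|\omega(t,y)| \leq \|\omega(t,\cdot)\|_{\LL^\infty(\T^2)}$ pointwise,
\[
|u(t,x)| \leq \int_{\T^2} |K(x-y)|\,|\omega(t,y)|\,dy \leq \|\omega(t,\cdot)\|_{\LL^\infty(\T^2)} \int_{\T^2} |K(x-y)|\,dy.
\]
Using the translation invariance of the Haar measure on $\T^2$, the integral in the right-hand side equals $\|K\|_{\LL^1(\T^2)}$, independent of $x$. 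Taking the supremum over $x \in \T^2$ yields the stated bound.

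There is no significant obstacle here: the only points to check are that $K \in \LL^1(\T^2)$, which follows from the bound $|K(x)| \leq C(|x|^{-1}+1)$ proved in the previous proposition together with integrability of $|x|^{-1}$ near the origin in two dimensions, and that $\omega(t,\cdot) \in \LL^\infty(\T^2)$, which is an assumption on the regularity of the solution (implicit in the statement through the notation $\|\omega(t,\cdot)\|_{\LL^\infty(\T^2)}$). Equivalently, this is just the $(p,q,r) = (1,\infty,\infty)$ case of Young's inequality $\|f*g\|_{\LL^r} \leq \|f\|_{\LL^p}\|g\|_{\LL^q}$ with $\frac{1}{r} = \frac{1}{p} + \frac{1}{q} - 1$, applied on the compact abelian group $\T^2$.
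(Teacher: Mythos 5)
Your proof is correct and complete: the estimate is exactly the $\LL^1 * \LL^\infty \to \LL^\infty$ case of Young's inequality, and you correctly ground the needed integrability $K \in \LL^1(\T^2)$ in the bound $|K(x)| \leq C(|x|^{-1}+1)$ from the preceding proposition. The paper itself does not spell out an argument but simply defers to Lemma 2.17 of the cited reference, so your elementary derivation is, if anything, more self-contained than what appears in the text, and it matches the standard route that citation takes.
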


A proof of this lemma can be found in \cite{Brze_niak_2016} (Lemma 2.17).
We will also need the following result.
\begin{lemma}
    $K$ can be extended to linear and continuous operators:
    \[
    K:=\nabla^{\perp}(-\Delta)^{-1} : H^{-\alpha}_2(\T^2) \to H^{-\alpha+1}_2(\T^2),
    \]
    for every $\alpha \in [0,+\infty]$.
\end{lemma}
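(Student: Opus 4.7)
The plan is to prove this by a direct Fourier multiplier computation, exploiting the fact that we work on the torus with zero-mean functions so that $\widehat{f}(0) = 0$ and the singularity of $(-\Delta)^{-1}$ at frequency zero is irrelevant.

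First, I would first establish the result on the dense subspace $C^{\infty}(\T^2) \cap \{f : \int_{\T^2} f = 0\}$ and then extend by density. For a smooth zero-mean $\omega$ with Fourier series $\omega(x) = \sum_{k \in \Z^2 \setminus \{0\}} \widehat{\omega}(k) e^{ik\cdot x}$, the operator acts on the Fourier side as
\[
\widehat{K\omega}(k) = \frac{i k^{\perp}}{|k|^2} \widehat{\omega}(k), \qquad k \in \Z^2 \setminus \{0\},
\]
so $K$ is a Fourier multiplier of order $-1$ with symbol $m(k) = i k^{\perp}/|k|^2$ of modulus $|m(k)| = |k|^{-1}$.

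Using the explicit norm formula from the notation section,
\[
\|K\omega\|_{\HH^{-\alpha+1}_2}^2 = \sum_{k \neq 0} (1+|k|^2)^{-\alpha+1}\,\frac{|\widehat{\omega}(k)|^2}{|k|^2},
\]
so the continuity of $K: \HH^{-\alpha}_2 \to \HH^{-\alpha+1}_2$ reduces to the pointwise bound
\[
\frac{(1+|k|^2)^{-\alpha+1}}{|k|^2} \;\leq\; C\,(1+|k|^2)^{-\alpha}, \qquad k \in \Z^2 \setminus \{0\},
\]
equivalently $(1+|k|^2)/|k|^2 \leq C$ uniformly. Since $|k|^2 \geq 1$ for every nonzero integer vector $k$, we have $(1+|k|^2)/|k|^2 \leq 2$, so $C = 2$ works. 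This yields $\|K\omega\|_{\HH^{-\alpha+1}_2} \leq \sqrt{2}\,\|\omega\|_{\HH^{-\alpha}_2}$ on smooth zero-mean functions.

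Finally, since smooth zero-mean functions are dense in $\HH^{-\alpha}_2(\T^2)$ (this is immediate from the Fourier characterization of the norm by truncation of the series), the bounded linear operator $K$ extends uniquely to a bounded linear operator $\HH^{-\alpha}_2(\T^2) \to \HH^{-\alpha+1}_2(\T^2)$ with operator norm at most $\sqrt{2}$. There is really no main obstacle here: the whole content is the discreteness of the dual lattice $\Z^2$, which ensures that the singularity of the symbol $1/|k|^2$ at the origin is excluded and that $(1+|k|^2)/|k|^2$ is bounded. Had we worked on $\R^2$ instead, one would need more care (e.g., Bessel potentials or the Mikhlin multiplier theorem in the $L^p$ setting); on $\T^2$ with $\widehat{\omega}(0) = 0$, the argument is essentially one line after setting up notation.
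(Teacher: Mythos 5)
Your proof is correct and follows essentially the same route as the paper's: both express $K$ as the Fourier multiplier with symbol $i k^{\perp}/|k|^2$, reduce the continuity to the pointwise bound $(1+|k|^2)/|k|^2 \leq 2$ on the nonzero lattice, and obtain the operator-norm constant $\sqrt{2}$. The only difference is that you spell out the density/extension step explicitly, which the paper leaves implicit.
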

\begin{proof}
    As shown in \cite{marchioro_pulvirenti}, $u:=K \star v$ can be expressed in Fourier series as:
    \[
    u(x)=\frac{i}{2 \pi} \sum_{k \in Z^2 ; k \neq 0} \exp \{i k \cdot x\} \hat{v}(k) \frac{k^{\perp}}{|k|^2} .
    \]
    Therefore if $\alpha \geq 0$:
    \[
    \begin{aligned}
    & \| u \|_{\HH^{1-\alpha}_2(\T^2)}^2 := \sum\limits_{k \in \Z^2, k \neq 0} (1+|k|^2)^{1-\alpha}|\hat{v}(k) \frac{k^{\perp}}{|k|^2}|^2 = \sum\limits_{k \in \Z^2, k \neq 0} (1+|k|^2)^{1-\alpha}|\hat{v}(k)|^2 \frac{1}{|k|^2} \\
    & = \sum\limits_{k \in \Z^2, k \neq 0} \frac{(1+|k|^2)}{|k|^2}(1+|k|^2)^{-\alpha}|\hat{v}(k)|^2 \leq 2 \sum\limits_{k \in \Z^2} (1+|k|^2)^{-\alpha}|\hat{v}(k)|^2 = 2 \| v \|_{\HH^{1-\alpha}_2(\T^2)}^2.
    \end{aligned}
    \]
    This completes the proof.
\end{proof}

\subsection{The approximation of the identity $V^N$}

Let us recall that $\T^2=[-\pi,\pi]^2/2\pi\Z^2$.
We show below an explicit example of the sequence $V^N$ as in the assumption \ref{ass:assprinci}. Let us define first:
\begin{equation}
    V(x):=\begin{cases}
        c \cdot e^{\frac{1}{\pi^2-4|x|^2}} \text{ if } |x| \leq \frac{\pi}{2}, \\
        0 \text{ otherwise },
    \end{cases}
\end{equation}
where the constant $c$ is such that $\int_{\T^2} V(y)dy = 1$. Define also $V^N(x):= N^{2\beta}V(N^{\beta}x)$ for $x \in [-\pi, \pi]^2$.
We prove that $V^N$ approximates the identity. First of all:
\begin{equation}
\begin{aligned}
\int_{\mathbb{T}^2} V^N(x-y) d y= 
\int_{[-\pi,\pi]^2} V^N(x-y) d y = \int_{x+ [-\pi,\pi]^2} V^N(y) d y = \int_{\T^2} V^N(y) d y = 1
\end{aligned}
\end{equation}
Next we show that, for $N$ sufficiently large:
\[
\int_{\mathbb{T}^2 \backslash \mathbb{T}_\delta^2(x)}\left|V^N(x-y)\right| d y \leq \epsilon.
\]
Denoting by $B_{\T^2}(x,\delta)$ the class of $\mathbb{T}_\delta^2(x)$ in $[-\pi,\pi]^2$:
\begin{equation}
\begin{aligned}
\int_{\mathbb{T}^2 \backslash \mathbb{T}_\delta^2(x)} V^N(x-y) d y = & \int_{[-\pi,\pi]^2 \backslash B_{\T^2}(x,\delta)} V^N(x-y) d y = \int_{[-\pi,\pi]^2 \backslash B_{\T^2}(x,\delta)} V^N(x-y) d y \\
& = \int_{x+ [-\pi,\pi]^2 \backslash B_{\T^2}(0,\delta)} V^N(y) d y = \int_{[-\pi,\pi]^2 \backslash B_{\T^2}(0,\delta)} V^N(y) dy = \\
& = \int_{\T^2 \backslash B(0,\delta)} V^N(y) dy = \int_{\T^2 \backslash B(0,\delta)} N^{2\beta} \cdot V(N^{\beta}y) dy.
\end{aligned}
\end{equation}
Finally if $\frac{1}{N^{\beta}} \frac{\pi}{2} < \delta$ we have the result because, for $N$ sufficiently large:
\[
\text{supp} (V(N^{\beta} \cdot)) \subset B(0,\delta).
\]

\begin{lemma}
\label{lem:K_negative_epsilon}
If $2/3 < \epsilon < 1$ then:
\begin{equation}
\label{eq:1}
\|K \star v\|_{\HH^{1-\varepsilon/2}_2} \lesssim \|v\|_{\HH^{-\varepsilon/2}_2}.
\end{equation}
\end{lemma}

\begin{proof}
One has:
\[
\|K * v\|_{\HH^{1-\varepsilon/2}_2(\T^2)}=\left\|\nabla^{\perp}(-\Delta)^{-1} v\right\|_{\HH^{1-\varepsilon/2}_2(\T^2)} \lesssim \|v\|_{\HH^{-\varepsilon/2}_2(\T^2)},
\]
because, since $\varepsilon \geq 2/3$, by Sobolev embeddings and $-1+\varepsilon \geq -\varepsilon/2$:
\[
\| v \|_{\HH^{-\varepsilon/2}(\T^2)} \lesssim \| v \|_{\HH^{-1+\varepsilon}(\T^2)}.
\]
\end{proof}

\newpage



\section*{Funding and authorship}

\textbf{Funding:} The first author has been funded by the Department of Mathematics, Imperial College London through a PhD studentship.
\\

\textbf{Authorship:} The first author is the corresponding author. Both authors contributed equally to the formulation of the results, the proofs, and the writing of the manuscript. All authors have read and approved the final version of the paper. The e-mail address of the corresponding author is \href{mailto:f.giovagnini23@imperial.ac.uk}{f.giovagnini23@imperial.ac.uk}.

\bibliographystyle{abbrvnat}
\bibliography{bibliography}

\end{document}